\numberwithin{equation}{section}
\newcommand{\R}{\mathbb{R}}
\newcommand{\RP}{\mathbb{R}\mathrm{P}}
\newcommand{\PP}{\mathbb{P}}
\newcommand{\CP}{\mathbb{C}\mathrm{P}}
\newcommand{\EE}{\mathbb{E}}
\newcommand{\CC}{C}
\newcommand{\Pol}{ \mathcal{P}_{n,d}}
\newcommand{\N}{\mathbb N}
\newtheorem{thm}{Theorem}
\newtheorem{prop}[thm]{Proposition}
\newtheorem*{thmi}{Theorem A}
\theoremstyle{remark} 
\newtheorem{remark}[]{Remark}
\newcommand{\be}{\begin{equation}}
\newcommand{\ee}{\end{equation}}
   \edef\Gin@extensions{\Gin@extensions,.mps}
\title{Low degree approximation of random polynomials}
\author{Daouda Niang Diatta}
\address{}
\email{}
\author{Antonio Lerario}
\address{}
\email{}
\begin{document}
\maketitle
\begin{abstract}We prove that with ``high probability'' a random Kostlan polynomial in $n+1$ many variables and of degree $d$ can be approximated by a polynomial of ``low degree'' \emph{without} changing the topology of its zero set on the sphere $S^n$. The dependence between the ``low degree'' of the approximation and the ``high probability'' is quantitative: for example, with overwhelming probability the zero set of a Kostlan polynomial of degree $d$ is isotopic to the zero set of a polynomial of degree $O(\sqrt{d \log d})$.   The proof is based on a probabilistic study of the size of $C^1$-stable neighborhoods of Kostlan polynomials. As a corollary we prove that certain topological types (e.g. curves with deep nests of ovals or hypersurfaces with rich topology) have exponentially small probability of appearing as zero sets of random Kostlan polynomials.  \end{abstract}

\section{Introduction}
Over the past few years there has been an intense activity around the field of \emph{Random Algebraic Geometry}, whose main interest has been studying topological properties of the zero set of \emph{random} real algebraic equations. 

This approach goes back to the classical work of Kac \cite{kac43}, who studied the expected number of real zeroes of a random polynomial in one variable whose coefficients are gaussian random variables, and was later extended and generalized in the 1990s to systems of equations in a sequence of influential papers by A. Edelman, E. Kostlan, M. Shub, S. Smale   \cite{EdelmanKostlan95, shsm, EKS, Ko2000, ShSm3, ShSm1}. 
%These works initiated a random program in real algebraic geometry and opened new research directions with connections to random matrix theory, complexity theory and numerical algebraic geometry \cite{Bu, Po, Ko90, BCSS, condition, AzWs, BePa1, BePa2, BeSh, BeSh2, ArBeBuCu, BuCuann, CuKrMa, DeMa, Ma, MaRo, Ro}. 
More recently, in 2011, P. Sarnak \cite{Sarnak} suggested to look at the connected components of a real algebraic curve from the random point of view, proposing a random version of Hilbert's Sixteenth Problem (to investigate the ``number, shape, and position'' of the connected components of a real algebraic hypersurface \cite{Wilson}). Since then the area has seen much progress \cite{GaWe1, GaWe3, GaWe2, FLL, NazarovSodin1, NazarovSodin2, Sarnak, SarnakWigman,Lerariolemniscate, Letwo, Lerarioshsp, LeLu:gap}, with a focus on the expectation of topological quantities such as the Betti numbers of random algebraic hypersurfaces \cite{GaWe2, GaWe3, FLL}.

 In this paper we concentrate on the so called \emph{Kostlan} model: we sample a random polynomial according to the rule
\be P(x)=\sum_{|\alpha|=d}\xi_\alpha \cdot \left(\left(\frac{d!}{\alpha_0!\cdots \alpha_n!}\right)^{1/2}x_0^{\alpha_0}\cdots x_n^{\alpha_n}\right),\ee
with $\{\xi_\alpha\}_{|\alpha|=d}$ a family of independent, standard gaussian variables (see Section \ref{sec:Kostlan} below for more details). A main feature of this probabilistic model, in the univariate case, is that the expectation of the number of real zeroes of a Kostlan polynomial equals $\sqrt{d}$ \cite{EdelmanKostlan95}. This phenomenon is called ``square-root law'': essentially the Kostlan polynomial seems to behave as if its degree is $\sqrt{d}$ rather than $d$.
In higher dimensions a similar phenomenon happens to the Betti numbers of its zero set: their expectation is of the order $O(d^{n/2})$, while the deterministic upper bound is $O(d^n).$  In this paper we give a further contribution in this direction, by proving the following theorem.

\begin{thmi}[Low-degree approximation]\label{thmi}Let $P$ be a random Kostlan polynomial of degree $d$ and $n+1$ many variables and denote by $p=P|_{S^n}$ its restriction to the unit sphere $S^{n}\subset \R^{n+1}$ and by $Z(p)\subset S^n$ its zero set on the sphere. As $d\to \infty$, with overwhelming probability the pair $(S^n,Z(p))$ is diffeomorphic to the pair $(S^n, Z(q))$ where $q$ is the restriction to the sphere of a polynomial of degree $O(\sqrt{d\log d})$.  \end{thmi}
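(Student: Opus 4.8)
The plan is to combine a deterministic $C^1$-stability principle for zero sets with two probabilistic estimates: a lower bound on the ``size of the $C^1$-stable neighbourhood'' of $p=P|_{S^n}$, and an upper bound on the error committed when $p$ is replaced by a low-degree truncation. For the deterministic step I would prove the following quantitative form of Thom's isotopy lemma. For $f\in C^\infty(S^n)$ put
\[
  \rho(f):=\inf_{x\in S^n}\max\bigl\{\,|f(x)|,\ \|\nabla_{S^n}f(x)\|\,\bigr\}.
\]
If $\rho(f)>0$ then $0$ is a regular value of $f$; moreover $\rho$ is $1$-Lipschitz for the $C^1(S^n)$ norm, so if $g\in C^1(S^n)$ satisfies $\|f-g\|_{C^1(S^n)}<\rho(f)$ then $0$ is a regular value of every member of the segment $f_t=(1-t)f+tg$, the zero set $F^{-1}(0)\subset S^n\times[0,1]$ of $F(x,t)=f_t(x)$ is a compact manifold with boundary and the projection $F^{-1}(0)\to[0,1]$ is a proper submersion, so the flow of a vector field trivialising this bundle (extended to all of $S^n\times[0,1]$) produces a diffeomorphism of pairs $(S^n,Z(f))\cong(S^n,Z(g))$. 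Hence it suffices to exhibit, with overwhelming probability, a polynomial $q$ of degree $O(\sqrt{d\log d})$ with $\|p-q\|_{C^1(S^n)}<\rho(p)$.

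For the approximant, expand $p=P|_{S^n}=\sum_k p_k$ in spherical harmonics ($p_k$ of degree $k$; only $k\le d$ with $k\equiv d\bmod 2$ occur) and set $q=\sum_{k\le D}p_k$ with $D=C\sqrt{d\log d}$; homogenising each $p_k$ by an appropriate power of $|x|^2$ shows that $q$ is the restriction to $S^n$ of a polynomial of degree $\le D$, hence an admissible competitor. The key analytic input is the decay of the power spectrum of the Kostlan field: its covariance kernel on the sphere is $\langle x,y\rangle^d$, and since $t\mapsto t^d=(\cos\theta)^d$ concentrates at angular scale $\theta\sim d^{-1/2}$, the Gegenbauer coefficients $v_{k,d}=\EE\|p_k\|_{L^2(S^n)}^2$ obey a Gaussian tail bound $\sum_{k>D}v_{k,d}\le C_n\,e^{-c_n D^2/d}=C_n\,d^{-c_nC^2}$. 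Applying the Bernstein and Nikolskii inequalities on $S^n$ band by band gives $\EE\|p-q\|_{C^1(S^n)}\le d^{O(n)}e^{-c_nD^2/(2d)}$, and the Borell--TIS inequality, applied to the centred Gaussian process $x\mapsto(p(x)-q(x),\nabla_{S^n}(p-q)(x))$ whose pointwise variances are $O(e^{-c_nD^2/d})$, upgrades this to: $\|p-q\|_{C^1(S^n)}\le d^{-B}$ with overwhelming probability, where $B$ is as large as we please once $C$ is chosen large enough.

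It remains to show $\rho(p)\ge d^{-A}$ for a fixed $A=A(n)$ with overwhelming probability. I would argue by a net bound. At a fixed $x\in S^n$, the vector $(p(x),\nabla_{S^n}p(x))\in\R^{n+1}$ is Gaussian and non-degenerate, with covariance computed from $\langle x,y\rangle^d$ and eigenvalues trapped between $d^{-O(1)}$ and $d^{O(1)}$; hence $\PP\bigl[\max\{|p(x)|,\|\nabla_{S^n}p(x)\|\}<\varepsilon\bigr]\le (C_n d^{O(1)}\varepsilon)^{n+1}$. Taking an $\eta$-net of $S^n$ of cardinality $O(\eta^{-n})$, a union bound controls the minimum over the net, while the oscillation of $x\mapsto(p(x),\nabla_{S^n}p(x))$ between neighbouring net points is at most $\eta\cdot\|p\|_{C^2(S^n)}$, and the Kostlan bound $\|p\|_{C^2(S^n)}\le d^{O(1)}$ (Bernstein, since the whole spectrum sits below degree $d$, together with Borell--TIS) holds with overwhelming probability. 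Choosing $\eta$ and $\varepsilon$ as suitable inverse powers of $d$ gives $\rho(p)\ge d^{-A}$ with overwhelming probability, and taking $C$ so that $B>A$ yields $\|p-q\|_{C^1(S^n)}<\rho(p)$ with overwhelming probability; the stability principle of the first paragraph then finishes the proof. The same scheme run with a free parameter $t$ in place of $\log d$ should give degree $O(\sqrt{dt})$ with probability $1-O(d^{-ct})$, which specialises to the stated assertion.

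I expect the main obstacle to be this lower bound on $\rho(p)$ --- the ``size of the $C^1$-stable neighbourhood'' of a Kostlan polynomial. The pointwise small-ball estimate is elementary, but extracting an \emph{overwhelming} rather than merely ``high'' probability requires the quantitative $C^2$ control of the Kostlan field and a careful balancing, so that the union bound over a polynomially large net does not spoil the estimate. It is crucial that $\rho(p)$ be only \emph{polynomially} small in $d$: the truncation error at degree $C\sqrt{d\log d}$ is itself only polynomially small ($\approx d^{-c_nC^2}$), so an exponentially small stable margin would force the approximating degree back up to $\Theta(d)$ and the statement would be vacuous. The power-spectrum tail bound --- the concentration of the Gegenbauer expansion of $t^d$ below frequency $\sqrt d$ --- is the other technical ingredient, but it reduces to a concrete one-variable estimate.
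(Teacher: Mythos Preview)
Your proposal is correct and follows the same three-step architecture as the paper --- a quantitative $C^1$-stability lemma, a probabilistic lower bound on the stable margin, and a probabilistic upper bound on the truncation error --- but the technical implementation of each step differs.

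For the stability step, the paper works with $\delta(p)=\mathrm{dist}_{\mathrm{BW}}(p,\Sigma_{n,d})$, the Bombieri--Weil distance to the discriminant, and invokes Raffalli's explicit formula $\delta(p)=\min_\theta\bigl(|p(\theta)|^2+\|\nabla_{S^n}p(\theta)\|^2/d\bigr)^{1/2}$ to prove that $\|f-p\|_{C^1}<\delta(p)/2$ suffices. Your $\rho(p)$ is an equivalent quantity (up to harmless $\sqrt d$ factors) and your isotopy argument is essentially the same. For the truncation error, the paper passes through a Sobolev norm $H^q$ (Proposition~1), computes $\EE_{\,p\in S^{N-1}}\|p-p|_L\|_{H^q}^2$ exactly from the weights $w_{n,d}(\ell)$, and applies Markov's inequality (Proposition~2); your route via band-by-band Bernstein/Nikolskii together with Borell--TIS is more direct and yields Gaussian rather than $1/t^2$ tails, which is a genuine (if inessential) improvement. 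For the stable-margin lower bound, the paper avoids nets entirely: it observes that $\Sigma_{n,d}$ sits inside the zero set of the classical discriminant polynomial and applies an algebraic tube-volume estimate (B\"urgisser--Cucker) to bound the measure of $\{\delta(p)<s^{-1}\|p\|_{\mathrm{BW}}\}$ by $O(d^{2n}/s)$. Your net-plus-small-ball argument reaches the same conclusion by elementary probabilistic means and has the advantage of not requiring any algebraic input.

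One clarification on the point you flagged as the main obstacle: you will \emph{not} obtain $\rho(p)\ge d^{-A}$ for a fixed $A=A(n)$ with overwhelming probability, and neither does the paper --- its Proposition~4 gives only $\PP\{\delta(p)<d^{-A}\|p\|_{\mathrm{BW}}\}\le c\,d^{2n-A}$, which is polynomial for fixed $A$. The correct logic (which your last paragraph in fact sketches) is: for each target rate $a>0$, first choose $A$ large enough that $\PP\{\rho(p)<d^{-A}\}\le d^{-a}$, then choose the constant $C$ in $D=C\sqrt{d\log d}$ so that $\|p-q\|_{C^1}\le d^{-A-1}$ with probability $\ge 1-d^{-a}$. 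The paper packages exactly this dependence as ``for every $a>0$ there exists $b>0$ such that $L=\sqrt{bd\log d}$ works with probability $\ge 1-O(d^{-a})$'' (its Proposition~6). So ``overwhelming'' here means faster than any prescribed polynomial, with the implicit constant in the $O(\sqrt{d\log d})$ allowed to depend on that polynomial rate; your scheme delivers precisely this.
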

The idea of the proof of the previous theorem is the following. Thom's isotopy Lemma implies that, given a function $p:S^n\to \R$ whose zero set $Z(p)\subset S^n$ is nonsingular, there is a small $C^1$ neighborhood (we call it a ``stable neighborhood'') such that all functions in this neighborhood have zero sets diffeomorphic to $Z(p)$. However, how large this neighborhood can be depends on $p$ and in Proposition \ref{prop:3} we prove that it contains a $C^1$-ball:
\be \left\{ \|f-p\|_{C^1}<\frac{\delta(p)}{2}\right\} \implies (S^n, Z(p))\sim (S^n, Z(f)), \ee
where $\delta(p)$ denotes the distance, in the Bombieri-Weil norm, from $p$ to the set of polynomials with a singular zero set (the ``discriminant'', see Section \ref{sec:stability}). In order to produce a low-degree approximation of $p$, we first write it as $p=\sum_{\ell}p_\ell$, where each $p_\ell$ denotes the projection of $p$ to the space of spherical harmonics of degree $\ell$, and then take only the part of degree smaller than $L$ of this expansion:
\be p|_{L}=\sum_{\ell\leq L}p_\ell.\ee
We will prove that, choosing $L=O(\sqrt{d\log d}),$ with overwhelming probability the difference $p-p|_{L}$ has small enough $C^1$-norm to be contained in the above stable neighborhood. 

From the technical point of view this last step requires three estimates: we first bound the $C^1$-norm of $p-p|_{L}$ with its Sobolev norm (Proposition \ref{prop1}), then the Sobolev norm with the Bombieri-Weil norm of the original polynomial (which is the norm endowing the space of polynomials with the Kosltan gaussian measure, Proposition \ref{prop2}) and finally we estimate the size (i.e. the probability) of a small neighborhood of the discriminant (Proposition \ref{prop4}). 

\subsection{Consequences}All the previous estimates are quantitative and produce different outcomes for different choices of the degree $L$ to which we truncate the expansion of $p$. The most general bound that we obtain is the following (Theorem \ref{thm:general} below): there exists $c_5(n)>0$ such that for every $L, \sigma>1$ we have:
\be \mathbb{P}\left\{\left\|p-p|_{L}\right\|_{C^1}<\frac{\delta(p)}{2}\right\}\geq1-\left(c_5(n)d^{\frac{5n}{2}+2}L^{2n-1}e^{-\frac{L^2}{d}}\sigma^2+\frac{1}{\sigma}\right).\ee
 For example, choosing $L$ to be a fraction of $d=\deg(p)$, the above $\sigma$ can be tuned so that the probability from the statement of Theorem A goes exponentially fast to one as $d\to \infty$. 
\begin{figure}
	\includegraphics[width=0.5\textwidth]{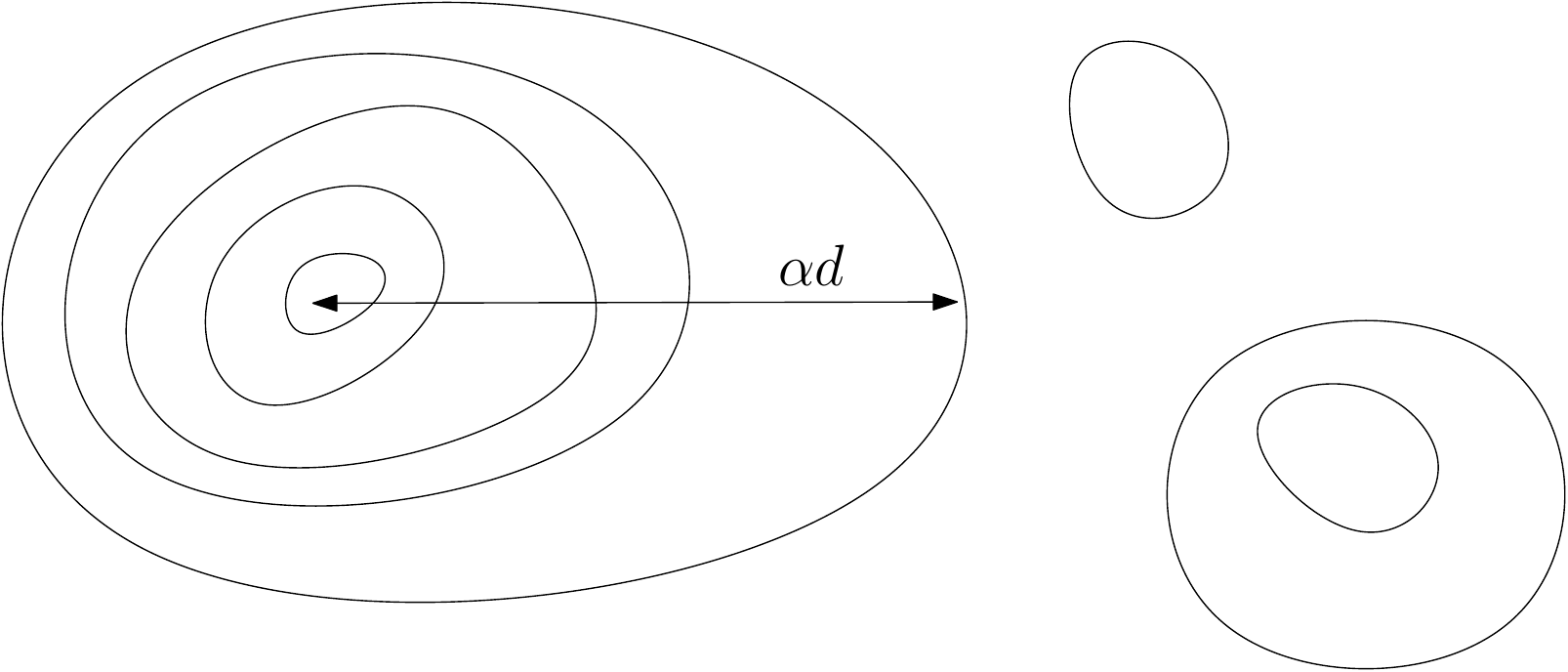}
	\caption{A random Kostlan curve has $\Theta(d)$ many connected components, however the probability that it has a nest of depth $\alpha d$ decays exponentially fast as $d\to \infty$ by Theorem \ref{thm:constraint2}.}\label{fig:nest}
	\end{figure}
	
We use this idea to constraint the typical topology of $(S^n, Z(p))$ as follows: (i) we identify a ``family''  of topological types (e.g. hypersurfaces of the sphere $S^n$ with more than $\alpha d^n$ many components); (ii) we show that we need at least degree $L_d$ to realize this topological type (e.g. we need degree at least $c(\alpha)d$ to have $\alpha d^n$ many components); (iii) we prove that with ``high probability'' $p$ can be stably-approximated by a polynomial of degree \emph{smaller} than $L_d$ (which implies its zero set \emph{cannot} have that topological type). Here are two examples of the application of this strategy.

(Theorem \ref{thm:constraint1} below) The probability that the zero set on $S^n$ of a Kostlan polynomial of degree $d$ has total Betti number larger than $\alpha d^n$ is bounded by $ \gamma_1(\alpha)e^{-\gamma_2(\alpha)d}$ for some constants $\gamma_1(\alpha),\gamma_2(\alpha)>0$. This was known for the case $n=1$ (points on $S^1$) and for the case $n=2$ (algebraic curves) \cite{GayetWelschingerrarefaction}, but only in the case of maximal curves, see Remark \ref{remark:maximal} below.

(Theorem \ref{thm:constraint2} below) The probability that the zero set on $S^n$ of a Kostlan polynomial of degree $d$ contains a nest of depth $\alpha d^n$ is bounded by $ \gamma_1(\alpha)e^{-\gamma_2(\alpha)d}$ for some constants $\gamma_1(\alpha),\gamma_2(\alpha)>0$, see Figure \ref{fig:nest}.

\begin{remark}Since the low-degree approximation from Theorem A is the projection of $p$ to low-degree harmonics, this can be used, in  the case $n=1$ and with high probability, to improve the complexity of a certain class of algorithms in real algebraic geometry (e.g. adaptive algorithms for real root isolation), essentially showing that ``for most polynomials'' the bound on the complexity of these algorithms is better than the absolute deterministic bound. We plan to elaborate on this idea in a forthcoming work.\end{remark}
\subsection*{Acknowledgements}The authors are indebted to Marie-Fran\c{c}oise Roy, who has played a crucial role for the existence of this paper. She is a friend and an intellectual guide.

\section{Spaces of polynomials and norms}\label{sec:poly}
We denote by $\mathcal{P}_{n,d}=\R[x_0, \ldots, x_n]_{(d)}$ the space of real homogeneous polynomials of degree $d$. We endow $\Pol$ with the Bombieri-Weil norm, which is defined as follows: writing a homogeneous polynomial in the monomial basis we set:
\be\label{eq:BW} \left\|\sum_{|\alpha|=d}\gamma_\alpha x_0^{\alpha_0}\cdots x_n^{\alpha_n}\right\|_{\mathrm{BW}}=\left(\sum_{|\alpha|=d}\gamma_\alpha^2\frac{\alpha_0!\cdots \alpha_n!}{d!}\right)^{1/2}.\ee
For every $\ell=0, \ldots, d$ we will also consider the space $\mathcal{H}_{n,\ell}\subset \mathcal{P}_{n, \ell}$ of homogeneous \emph{harmonic} polynomials, i.e. polynomials $H$ such that $\Delta_{\R^{n+1}}H=0$. It turns out that the space $\mathcal{P}_{n,d}$ can be decomposed as:
\be\label{eq:decomp} \mathcal{P}_{n,d}=\bigoplus_{d-\ell\in 2\mathbb{N}}\|x\|^{d-\ell}\mathcal{H}_{n,\ell}.\ee
The decomposition \eqref{eq:decomp} has two important properties (see \cite{Kostlan95}):
\begin{itemize}
\item [(i)]Given a scalar product which is invariant under the action of $O(n+1)$ on $\mathcal{P}_{n,d}$ by change of variables, the decomposition \eqref{eq:decomp} is orthogonal for this scalar product.
\item [(ii)] The action of $O(n+1)$ on $\mathcal{P}_{n,d}$ preserves each $\mathcal{H}_{n,\ell}$ and the induced representation on the space of harmonic polynomials is irreducible. In particular there exists a unique, up to multiples, scalar product on $\mathcal{H}_{n,\ell}$ which is $O(n+1)$-invariant.
\end{itemize}
The space $\mathcal{P}_{n,d}$ injects (by taking restrictions of polynomials) into the space $\CC^{\infty}(S^n, \R)$ of smooth functions on the unit sphere $S^n\subset \R^{n+1}$. We denote by 
\be\mathcal{S}_{n,d}=\{\textrm{$p:S^n\to \R$ such that  $p=P|_{S^n}$ with $P\in \mathcal{P}_{n,d}$}\}=\mathcal{P}_{n,d}|_{S^n}.\ee the image of such injection. In particular the two vector spaces $\mathcal{P}_{n,d}$ and $\mathcal{S}_{n,d}$ are isomorphic:
\be \mathcal{P}_{n,d}\simeq \mathcal{S}_{n, d} \simeq \R^{N}\quad \textrm{where $N={n+d\choose d}$}.\ee
We introduce the following convention: given $P\in \mathcal{P}_{n,d}$ we denote by $p=P|_{S^n}$ (i.e. we will use capital letters for polynomials in $\mathcal{P}_{n,d}$ and small letters for their restrictions in $\mathcal{S}_{n,d}$). Restricting polynomials in $\mathcal{H}_{n,\ell}$ to the unit sphere we obtain exactly eigenfunctions of the spherical laplacian:
\be V_{n,\ell}=\{\textrm{$h:S^n\to \R$ such that $\Delta_{S^n}h=-\ell(\ell+n-1)h$}\}=\mathcal{H}_{n, \ell}|_{S^n}.\ee

We will consider various norms on $\mathcal{S}_{n,d}$ (all these norms are in fact defined on $\CC^\infty(S^n, \R)$):

\begin{enumerate}
\item The Bombieri-Weil norm, simply defined for $p=P|_{S^n}$ as $\|p\|_{\mathrm{BW}}=\|P\|_{\mathrm{BW}}.$
Note that the same $p:S^n\to \R$ can be the restriction of two different $P_1\in \mathcal{P}_{n, d_1}$ and $P_2\in \mathcal{P}_{n, d_2}$ (for example: take $P_2(x)=\|x\|^{2}P_1(x)$), it is therefore important for the computation of the Bombieri-Weil norm to specify the space where $p$ comes from, i.e. its original homogeneous degree.
\item The $\CC^1$-norm defined for $p\in \mathcal{S}_{n,d}$ as:
\be \|p\|_{\CC^1}=\max_{\theta \in S^n}|p(\theta)|+\max_{\theta\in S^n}\|\nabla_{S^n} p(\theta)\|,\ee
where $\nabla_{S^n}p$ denotes the \emph{spherical gradient}, i.e. the orthogonal projection on the unit sphere of the gradient of $p$. 
\item The $L^2$-norm, defined for $p\in \mathcal{S}_{n,d}$ as:
\be \|p\|_{L^2}=\left(\int_{S^n} p(\theta)^2\,\mathrm{d}\theta\right)^{1/2},\ee
where ``$\mathrm{d}\theta$'' denotes integration with respect to the standard volume form of the sphere.
In the sequel we will denote by $\{y_{\ell, j}\}_{j\in J_\ell}$ a chosen $L^2$-orthonormal basis of $V_{n, \ell}$.

\item The Sobolev $q$-norm, defined for $p=\sum_{\ell}p_\ell$ (decomposed as in \eqref{eq:decomp}) by:
\be \|p\|_{H^q}=\left(\sum_{d-\ell\in2\mathbb{N}}\ell^{2q}\|p_\ell\|_{L^2}^2\right)^{1/2}.\ee
(Note that $\|\cdot\|_{H^0}=\|\cdot\|_{L^2}$.)
\end{enumerate}
The decomposition \eqref{eq:decomp} induces a decomposition:
\be \label{eq:decomp2}\mathcal{S}_{n,d}=\bigoplus_{d-\ell\in 2\mathbb{N}}V_{n,\ell}.\ee
By property (i) above this decomposition is orthogonal both for the Bombieri-Weil, the $L^2$ and the Sobolev scalar products. Moreover, because of property (ii) above, the Bombieri-Weil scalar product, the $L^2$ and the Sobolev one are one multiple of the others on $V_{n, \ell}$ (viewed as a subspace of $\mathcal{P}_{n,d}$):
\be\label{eq:rescaling} \|h_{n, \ell}\|_{L^2}=w_{n,d}(\ell)\|h_{n, \ell}\|_{\textrm{BW}}, \quad  \|h_{n, \ell}\|_{H^q}=\ell^q w_{n,d}(\ell)\|h_{n, \ell}\|_{\textrm{BW}}\quad \forall h_{n, \ell}\in V_{n, \ell}\subset \mathcal{S}_{n, \ell}.\ee
The rescaling weights are given by (see \cite[Example 1]{FLL}):
\be\label{eq:weights}  {w_{n,d}(\ell)} = \left(\textrm{vol}(S^n)\Gamma \left( \frac{n+1}{2} \right)  \frac{ \Gamma \left( \frac{d+\ell}{2} + 1 \right) }{ \Gamma \left( \frac{n+1}{2} + \frac{d+\ell}{2} \right)} \frac{ 1}{2^{d} } \binom{d}{\frac{d-\ell}{2}}\right)^{1/2}.
\ee
We observe also the following important fact: writing $P=\sum_{\ell}P_\ell$ with each $P_\ell\in \|x\|^{d-\ell}\mathcal{H}_{n,\ell}$ as in \eqref{eq:decomp}, when taking restrictions to the unit sphere we have $p=\sum_{\ell}p_\ell$ with each $p_\ell$ the restriction to $S^n$ of a polynomial of degree $\ell$: in other words, the restriction to the unit sphere ``does not see'' the $\|x\|^{d-\ell}$ factor, which is constant on the unit sphere.

\begin{prop}\label{prop1}There exists a constant $c_1(n)>0$ such that for every $q\geq\frac{n+1}{2}$  and for every  $p\in \mathcal{S}_{n,d}$ we have:
\be \|p\|_{\CC^1}\leq c_1(n)d^{\frac{1}{2}} \|p\|_{H^{q}}.\ee
%Moreover there exists a constant $c_2(n)>0$ such that for every $q\geq 0$ and for every $p\in \mathcal{S}_{n,d}$:
%\be \|p\|_{H^q}\leq c_2(n)d^{q-\frac{n}{4}}\|p\|_{\mathrm{BW}}.\ee
\end{prop}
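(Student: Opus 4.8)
The plan is to pass to the orthogonal decomposition $p=\sum_{d-\ell\in2\mathbb{N}}p_\ell$ of \eqref{eq:decomp2}, estimate the $\CC^1$-norm of each spherical-harmonic component $p_\ell\in V_{n,\ell}$ in terms of its $L^2$-norm, and then recombine by Cauchy--Schwarz. The heart of the matter is a pointwise estimate for a single harmonic and its spherical gradient. Expanding $p_\ell=\sum_{j\in J_\ell}c_j y_{\ell,j}$ in the chosen $L^2$-orthonormal basis, Cauchy--Schwarz in the index $j$ gives
\be |p_\ell(\theta)|^2\le\|p_\ell\|_{L^2}^2\sum_{j\in J_\ell}y_{\ell,j}(\theta)^2,\qquad \|\nabla_{S^n}p_\ell(\theta)\|^2\le\|p_\ell\|_{L^2}^2\sum_{j\in J_\ell}\|\nabla_{S^n}y_{\ell,j}(\theta)\|^2.\ee
Both functions $\theta\mapsto\sum_j y_{\ell,j}(\theta)^2$ and $\theta\mapsto\sum_j\|\nabla_{S^n}y_{\ell,j}(\theta)\|^2$ are independent of the choice of orthonormal basis and invariant under the $O(n+1)$-action on $V_{n,\ell}$, hence constant on $S^n$; integrating over the sphere and using $\|y_{\ell,j}\|_{L^2}=1$ together with the eigenvalue identity $\|\nabla_{S^n}y_{\ell,j}\|_{L^2}^2=\ell(\ell+n-1)$ identifies these constants as $\dim V_{n,\ell}/\mathrm{vol}(S^n)$ and $\ell(\ell+n-1)\dim V_{n,\ell}/\mathrm{vol}(S^n)$. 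Since $\dim V_{n,\ell}=\dim\mathcal{H}_{n,\ell}\le c(n)(1+\ell)^{n-1}$, this produces a constant $c(n)>0$ with $\|p_\ell\|_{\CC^1}\le c(n)(1+\ell)^{\frac{n+1}{2}}\|p_\ell\|_{L^2}$ for every $\ell$ (one could instead quote a Bernstein--Markov inequality on $S^n$ for the gradient part).

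The second step is to sum over $\ell$. By the triangle inequality,
\be \|p\|_{\CC^1}\le\sum_{d-\ell\in2\mathbb{N}}\|p_\ell\|_{\CC^1}\le c(n)\sum_{d-\ell\in2\mathbb{N}}(1+\ell)^{\frac{n+1}{2}}\|p_\ell\|_{L^2},\ee
and since $(1+\ell)^{\frac{n+1}{2}}\le 2^{\frac{n+1}{2}}\ell^{\frac{n+1}{2}}$ for $\ell\ge1$, Cauchy--Schwarz in $\ell$ applied to the sum over $1\le\ell\le d$ gives
\be \sum_{\ell\ge1}\ell^{\frac{n+1}{2}}\|p_\ell\|_{L^2}=\sum_{\ell\ge1}\ell^{\frac{n+1}{2}-q}\cdot\ell^{q}\|p_\ell\|_{L^2}\le\Big(\sum_{\ell=1}^{d}\ell^{\,n+1-2q}\Big)^{1/2}\Big(\sum_{d-\ell\in2\mathbb{N}}\ell^{2q}\|p_\ell\|_{L^2}^2\Big)^{1/2}=\Big(\sum_{\ell=1}^{d}\ell^{\,n+1-2q}\Big)^{1/2}\|p\|_{H^q}.\ee
Here the hypothesis $q\ge\frac{n+1}{2}$ enters exactly once and decisively: it forces $n+1-2q\le0$, so each term of this \emph{finite} sum satisfies $\ell^{\,n+1-2q}\le 1$, whence $\sum_{\ell=1}^d\ell^{\,n+1-2q}\le d$. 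Combining the displays yields $\|p\|_{\CC^1}\le c_1(n)\,d^{1/2}\|p\|_{H^q}$, as claimed. (The component $p_0$, present only when $d$ is even, satisfies $\|p_0\|_{\CC^1}=\|p_0\|_{L^2}/\mathrm{vol}(S^n)^{1/2}$ and is absorbed upon reading the weight $\ell^{2q}$ in $\|\cdot\|_{H^q}$ as $\max\{\ell,1\}^{2q}$, which is harmless since $q>0$.)

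The only genuinely delicate point is the pointwise gradient bound $\|\nabla_{S^n}p_\ell\|_{L^\infty}\le c(n)\ell^{(n+1)/2}\|p_\ell\|_{L^2}$, i.e.\ the evaluation of the $O(n+1)$-invariant density $\sum_j\|\nabla_{S^n}y_{\ell,j}(\theta)\|^2$; the sup-norm bound for $p_\ell$ itself is the classical addition theorem, and everything after that is bookkeeping. It is worth recording that the factor $d^{1/2}$ is sharp for inequalities of this type: it is already attained, up to constants, by $p=\sum_{1\le\ell\le d,\ d-\ell\in2\mathbb{N}}\ell^{-q}Y_\ell$ with $Y_\ell\in V_{n,\ell}$ the unit-norm harmonic representing the functional $h\mapsto\partial_v h(\theta_0)$ at a fixed $\theta_0\in S^n$ and direction $v$; for it $\|p\|_{H^q}\asymp d^{1/2}$ while $\partial_v p(\theta_0)\asymp\sum_\ell\ell^{\frac{n+1}{2}-q}\asymp d$ when $q=\frac{n+1}{2}$, so $\|p\|_{\CC^1}\asymp d$.
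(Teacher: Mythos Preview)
Your proof is correct and follows essentially the same architecture as the paper's: decompose $p=\sum_\ell p_\ell$, prove the single-harmonic estimate $\|p_\ell\|_{\CC^1}\le c(n)\ell^{(n+1)/2}\|p_\ell\|_{L^2}$, then Cauchy--Schwarz over $\ell$ to extract the factor $d^{1/2}$.

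The one substantive difference is in how you obtain the pointwise gradient bound. The paper writes $h_\ell(\varphi)=\langle h_\ell,Z_\ell(\varphi,\cdot)\rangle_{L^2}$ via the zonal harmonic, differentiates under the integral, and then invokes an external result of Seeley to bound $\int_{S^n}|\partial_i Z_\ell(\varphi,\theta)|^2\,\mathrm{d}\theta\le C\ell^2\int_{S^n}|Z_\ell(\varphi,\theta)|^2\,\mathrm{d}\theta$. Your route---observing that $\theta\mapsto\sum_j\|\nabla_{S^n}y_{\ell,j}(\theta)\|^2$ is basis-independent and $O(n+1)$-invariant, hence constant, and identifying the constant by integrating and using $\int_{S^n}\|\nabla_{S^n}y_{\ell,j}\|^2=\ell(\ell+n-1)$---is more self-contained and avoids the citation. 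The paper's reproducing-kernel viewpoint is exactly the same computation in disguise (your sum $\sum_j y_{\ell,j}(\theta)^2$ is $Z_\ell(\theta,\theta)$), so the two arguments are really equivalent; yours just makes the invariance explicit rather than delegating it.

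Your handling of the $\ell=0$ component is in fact more careful than the paper's: with the $H^q$-norm as defined (weight $\ell^{2q}$, which vanishes at $\ell=0$), the inequality cannot literally hold for a nonzero constant $p_0$, and the paper's proof glosses over this. Your proposed convention $\max\{\ell,1\}^{2q}$ is the standard fix; in the paper's applications the proposition is only used on $p-p|_L$, which has no constant component, so the issue never surfaces.
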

\begin{proof}For the proof we use the fact that for every $\ell=0, \ldots, d$ the space $V_{n,\ell}$ with the $L^2$-scalar product is a reproducing kernel Hilbert space, i.e. there exists $Z_\ell:S^n\times S^n\to \R$ such that for every $h_\ell\in V_{n,\ell}$:
\be\label{eq:reproducing} h_\ell(\varphi)=\int_{S^n}h_\ell(\theta)Z_{\ell}(\varphi, \theta)\mathrm{d}\theta.\ee
The function $Z_\ell$ (the ``zonal harmonic'') is defined as follows: letting $\{y_{\ell, j}\}_{j\in J_{\ell}}$ be an $L^2$-orthonormal basis for $V_{n, \ell}$ we set
\be Z_\ell(\theta_1, \theta_2)=\sum_{j\in J_{\ell}}y_{\ell, j}(\theta_1)y_{\ell, j}(\theta_2)\ee
(written in this way \eqref{eq:reproducing} is easily verified). From this it follows that:
\be\label{eq:zonal} \|Z_\ell(\theta_1, \cdot)\|^2_{L^2}=\langle Z_\ell(\theta_1, \cdot), Z_\ell(\theta_1, \cdot)\rangle_{L^2}=Z_\ell(\theta_1, \theta_2)=\dim (V_{n,\ell})=O(\ell^{n-1}),\ee
where the last identity follows from \cite[Proposition 5.7 (d)]{HFT} and \cite[Proposition 5.8]{HFT}.
% has the following properties: for every $x, y\in S^n$ and for every orthonormal frame field $\{\partial_1, \ldots, \partial_n\}$ at $y\in S^n$:
%\be |Z_\ell(x, y)|\leq O(\ell^{\frac{n-1}{2}})\quad \textrm{and}\quad \left|\partial_{i}Z_\ell(x, y)\right|\leq O(\ell^{\frac{n+1}{2}}).\ee
%\comm{\red Daouda: Antonio, can you indicate me a reference where I can find more explanation about the used majoration of the "zonal harmonic" and its derivatives ?}\\
We can therefore estimate $h_\ell$ using Cauchy-Schwartz in \eqref{eq:reproducing}:
\be \label{eq:est1}|h_\ell(\varphi)|\leq C_1(n)\ell^{\frac{n-1}{2}}\|h_\ell\|_{L^2}\quad \forall\varphi\in S^n.\ee
Similarly, for every orthonormal frame field $\{\partial_1, \ldots, \partial_n\}$ at $\varphi \in S^n$, differentiating \eqref{eq:reproducing} under the integral one obtains:
\be \partial _jh_\ell(\varphi)=\int_{S^n}h_\ell(\theta)\partial_iZ_\ell(\varphi, \theta)\mathrm{d}\theta,\ee
and consequently, using again Cauchy-Schwartz:
\begin{align} |\partial _jh_\ell(\varphi)|&\leq\int_{S^n}|h_\ell(\theta)\partial_iZ_\ell(\varphi, \theta)|\mathrm{d}\theta\\
&\leq\|h_\ell\|_{L^2}\left(\int_{S^n}|\partial_iZ_\ell(\varphi, \theta)|^2\mathrm{d}\theta\right)^{1/2}\\
&\leq \|h_\ell\|_{L^2}C_2(n)\left(\ell^2\int_{S^n}|Z_\ell(\varphi, \theta)|^2\mathrm{d}\theta\right)^{1/2}\quad \textrm{(by \cite[Theorem 4]{Seeley})}\\
&\leq C_3(n)\|h_{\ell}\|\ell^{\frac{n+1}{2}}.
\end{align}
From this it follows that:
\be \label{eq:est2}\|\nabla_{S^n}h_\ell(\varphi)\|\leq C_4(n)\ell^{\frac{n+1}{2}}\|h_\ell\|_{L^2}\quad \forall\varphi\in S^n.\ee
Given now $p\in \mathcal{S}_{n,d}$ we write $p=\sum_\ell h_\ell$ with each $h_\ell\in V_{n,\ell}$, as in \eqref{eq:decomp2}. Using \eqref{eq:est1} and \eqref{eq:est2} we can estimate for $\varphi\in S^n$:
\begin{align}|p(\varphi)|+\|\nabla_{S^n}p(\varphi)\| & \leq\sum_{d-\ell \in 2\mathbb{N}}\left(|h_\ell(\varphi)|+\|\nabla_{S^n}h_\ell(\varphi)\|\right)\\
&\leq C_5(n)\sum_{d-\ell \in 2\mathbb{N}}\ell^{\frac{n+1}{2}}\|h_\ell\|_{L^2}\\
&\leq C_6(n)\left(\sum_{d-\ell \in 2\mathbb{N}}\ell^{n+1}\|h_\ell\|_{L^2}^2\right)^{1/2} \left(\frac{d}{2}\right)^{1/2}\\
\label{eq:last}&\leq c_1(n)\sqrt{d}\|p\|_{H^{q}}\quad \textrm{for $q\geq \frac{n+1}{2}$.}
\end{align}
In the third inequality we have used Cauchy-Schwartz in $\R^{[d/2]}$ for the vectors:
\be v_1=\left(d^{\frac{n+1}{2}}\|h_{d}\|_{L^2}, \cdots, \ell^{\frac{n+1}{2}}\|h_\ell\|_{L^2}, \cdots\right)\quad \textrm{and}\quad v_2=(1, \ldots, 1)\ee
so that:
\be \sum_{d-\ell \in 2\mathbb{N}}\ell^{\frac{n+1}{2}}\|h_\ell\|_{L^2}=\langle v_1, v_2\rangle_{\R^{[d/2]}}\leq\|v_1\|\|v_2\|=\left(\sum_{d-\ell \in 2\mathbb{N}}\ell^{n+1}\|h_\ell\|_{L^2}^2\right)^{1/2} \left(\left[\frac{d}{2}\right]\right)^{1/2}\ee
Taking the supremum over $\varphi\in S^n$ in \eqref{eq:last}, this implies the statement.

%For the second inequality we argue as follows. Given $p\in \mathcal{S}_{n,d}$ we write
%\be p=\sum_{d-\ell\in 2\mathbb{N}}\sum_{j\in J_\ell}\gamma_{\ell, j}w_{n,d}(\ell)y_{\ell, j}\ee 
%with $\left\{y_{\ell,j}\right\}_{d-\ell\in 2\mathbb{N}, j\in J_\ell}$ an $L^2$-orthonormal basis for $\mathcal{S}_{n, d}$. Note that, because of \eqref{eq:rescaling}, we have $\|p\|_{\textrm{BW}}^2=\sum_{j, \ell}\gamma_{j, \ell}^2.$ Using \eqref{eq:rescaling} again we can estimate:
%\begin{align}\|p\|_{H^q}^2&=\sum_{d-\ell\in 2\mathbb{N}}\sum_{j\in J_\ell}\gamma_{\ell, j}^2\ell^{2q}w_{n,d}(\ell)^2\\
%&\leq \max_{\ell}\{\ell^{2q}w_{n,d}(\ell)^2\}\sum_{d-\ell\in 2\mathbb{N}}\sum_{j\in J_\ell}\gamma_{\ell, j}^2\\
%&\label{eq:est}=\max_{\ell}\{\ell^{2q}w_{n,d}(\ell)^2\}\|p\|^2_{\textrm{BW}}.
%\end{align}
%We use now \eqref{eq:weights} and estimate:
%\begin{align}\ell^{2q}w_{n,d}(\ell)^2&\leq C_3(n)\ell^{2q}\left(\frac{d+\ell}{2}\right)^{-\frac{n-1}{2}}\frac{1}{2^d}{d\choose \frac{d-\ell}{2}}\\
%&=   C_3(n)\ell^{2q}\left(\frac{d+\ell}{2}\right)^{-\frac{n}{2}}\frac{\sqrt{d}}{2^d}{d\choose \frac{d-\ell}{2}}\\
%&\label{eq:wl}\leq C_4(n)d^{2q-\frac{n}{2}},
%\end{align}
%where in the last step we have used the fact that $\frac{\sqrt{d}}{2^d}{d\choose \frac{d-\ell}{2}}\leq O(1).$ Plugging \eqref{eq:wl} into \eqref{eq:est} gives the desired inequality.
\end{proof}

\section{Gaussian measures and random polynomials}\label{sec:Kostlan}The space $\mathcal{P}_{n,d}$ can be turned into a gaussian space by sampling a random polynomial according to the rule:
\be P(x)=\sum_{|\alpha|=d}\xi_\alpha \cdot \left(\left(\frac{d!}{\alpha_0!\cdots \alpha_n!}\right)^{1/2}x_0^{\alpha_0}\cdots x_n^{\alpha_n}\right),\ee
with $\{\xi_\alpha\}_{|\alpha|=d}$ a family of independent, standard gaussian variables. A random polynomial defined in this way is called a \emph{Kostlan} polynomial. An alternative way for writing a random Kostlan polynomial is to expand it in the spherical harmonic basis:
 \be P(x)=\sum_{d-\ell\in 2\mathbb{N}}\sum_{j\in J_\ell}\xi_{\ell,j} \cdot\left(w_{n,d}(\ell)\|x\|^{d-\ell}y_{\ell,j}\left(\frac{x}{\|x\|}\right)\right),\ee
 where $\{\xi_{\ell,j}\}_{\ell, j}$ is a family of independent, standard gaussian variables and $\{w_{n,d}(\ell)\}_{d-\ell\in 2\mathbb{N}}$ are given by \eqref{eq:weights}.
\begin{remark}Observe that both
\be \left\{\left(\frac{d!}{\alpha_0!\cdots \alpha_n!}\right)^{1/2}x_0^{\alpha_0}\cdots x_n^{\alpha_n}\right\}_{|\alpha|=d}\quad \textrm{and}\quad \left\{w_{n,d}(\ell)\|x\|^{d-\ell}y_{\ell,j}\left(\frac{x}{\|x\|}\right)\right\}_{d-\ell\in 2\mathbb{N}, j\in J_\ell}\ee are Bombieri-Weil orthonormal bases for $\mathcal{P}_{n,d}.$ More generally, given a basis $\{F_k\}_{k=1}^{N}$ for $\mathcal{P}_{n,d}$ which is orthonormal for the Bombieri-Weil scalar product, a random Kostlan polynomial can be defined by:
\be F(x)=\sum_{k=1}^{N}\xi_k F_k(x),\ee
where $\{\xi_k\}_{k=1}^N$ is a family of independent, standard gaussian variables.
\end{remark}
Given $L\in\{0, \ldots, d\}$ we consider the projection $\mathcal{S}_{n,d}\to \mathcal{S}_{n, L}$ defined by expanding $p$ in spherical harmonics and taking only the terms of degree at most $L$ of this expansion:
\be p=\sum_{d-\ell\in 2\N}p_\ell\quad \textrm{and}\quad  p|_{L}=\sum_{d-\ell\in 2\N, \ell\leq L}p_\ell.\ee
\begin{prop}\label{prop2}There exists a constant $c_2(n)>0$ such that for all $t,q\geq 0$ and for every $L\in \{0, \ldots, d\}$ we have:
\be
\PP\bigg\{\left\|p-p|_{L}\right\|_{H^q}\leq t \|p\|_{\mathrm{BW}}\bigg\}\geq 1-c_2(n)\frac{d^{-\frac{3n}{2}+1}L^{2q+n-2}e^{-\frac{L^2}{d}}}{t^2}
\ee
\end{prop}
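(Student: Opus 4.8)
The plan is to diagonalise both norms using the Gaussian coordinates provided by a Bombieri--Weil-orthonormal spherical-harmonic basis. In these coordinates the event $\{\|p-p|_{L}\|_{H^q}\le t\|p\|_{\mathrm{BW}}\}$ becomes $\{\langle A\xi,\xi\rangle\le t^2\|\xi\|^2\}$ for a fixed diagonal positive semidefinite matrix $A$ and a standard Gaussian vector $\xi\in\R^N$; since this inequality is scale-invariant it only depends on the uniformly random unit vector $\xi/\|\xi\|$, so a single application of Markov's inequality on $S^{N-1}$ reduces the whole statement to a \emph{deterministic} estimate on a weighted tail of the sequence $\bigl(w_{n,d}(\ell)^2\dim V_{n,\ell}\bigr)_\ell$, which I then extract from the closed form \eqref{eq:weights} by Stirling's formula.

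First I would write $p=\sum_{d-\ell\in 2\N}p_\ell$ as in \eqref{eq:decomp2}. The decomposition is orthogonal for both the $H^q$ and the Bombieri--Weil scalar products, and on each $V_{n,\ell}$ the norms are related by \eqref{eq:rescaling}, so
\[
\|p-p|_{L}\|_{H^q}^2=\!\!\sum_{\substack{d-\ell\in 2\N\\ \ell>L}}\!\!\ell^{2q}\,w_{n,d}(\ell)^2\,\|p_\ell\|_{\mathrm{BW}}^2,
\qquad
\|p\|_{\mathrm{BW}}^2=\!\!\sum_{d-\ell\in 2\N}\!\!\|p_\ell\|_{\mathrm{BW}}^2 .
\]
Expanding $P$ in the Bombieri--Weil-orthonormal basis $\{w_{n,d}(\ell)\|x\|^{d-\ell}y_{\ell,j}\}$ of Section~\ref{sec:Kostlan}, one has $\|p_\ell\|_{\mathrm{BW}}^2=\sum_{j\in J_\ell}\xi_{\ell,j}^2$ and $\|p\|_{\mathrm{BW}}^2=\|\xi\|^2$, where $\xi=(\xi_{\ell,j})_{\ell,j}\in\R^N$ is standard Gaussian; hence the complement of the event is $\{\langle A\xi,\xi\rangle>t^2\|\xi\|^2\}$ with $A$ the diagonal matrix equal to $\ell^{2q}w_{n,d}(\ell)^2$ on the level-$\ell$ block of coordinates when $\ell>L$ and $0$ otherwise. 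Both sides of this last inequality are $2$-homogeneous in $\xi$, so the event depends only on $u:=\xi/\|\xi\|$, which is uniform on $S^{N-1}$; by Markov's inequality and the identity $\EE_u[u_iu_j]=\delta_{ij}/N$ for $u$ uniform on $S^{N-1}$,
\[
\PP\bigl\{\|p-p|_{L}\|_{H^q}>t\|p\|_{\mathrm{BW}}\bigr\}=\PP_{u\sim\mathrm{Unif}(S^{N-1})}\bigl\{\langle Au,u\rangle>t^2\bigr\}\le\frac{\EE_u\langle Au,u\rangle}{t^2}=\frac{\tr A}{N\,t^2}.
\]
Since the level-$\ell$ block of $A$ has size $\dim V_{n,\ell}$ and $N=\binom{n+d}{d}\ge d^n/n!$, the proposition reduces to the deterministic inequality
\[
\sum_{\substack{d-\ell\in 2\N\\ \ell>L}}\ell^{2q}\,w_{n,d}(\ell)^2\,\dim V_{n,\ell}\ \le\ c(n)\,d^{-\frac n2+1}\,L^{2q+n-2}\,e^{-L^2/d}.
\]

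The last inequality is where the genuine work is, and it is the step I expect to be the main obstacle. From \eqref{eq:weights}, Stirling's formula gives, uniformly in $0\le\ell\le d$, that $\dfrac{\Gamma\!\left(\frac{d+\ell}{2}+1\right)}{\Gamma\!\left(\frac{n+1}{2}+\frac{d+\ell}{2}\right)}=\bigl(\tfrac{d+\ell}{2}\bigr)^{\frac{1-n}{2}}\bigl(1+O(1/d)\bigr)\le c(n)\,d^{\frac{1-n}{2}}$, while the remaining factor is the point mass $2^{-d}\binom{d}{(d-\ell)/2}=\PP\{\mathrm{Bin}(d,\tfrac12)=\tfrac{d-\ell}{2}\}$, which satisfies $2^{-d}\binom{d}{(d-\ell)/2}\le C\,d^{-1/2}e^{-\ell^2/(2d)}$ by the local central limit theorem (or a direct Stirling estimate). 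Combined with $\dim V_{n,\ell}=O(\ell^{n-1})$ from \eqref{eq:zonal}, this gives $\ell^{2q}w_{n,d}(\ell)^2\dim V_{n,\ell}\le c(n)\,d^{-n/2}\,\ell^{2q+n-1}e^{-\ell^2/(2d)}$, and I would then bound $\sum_{\ell>L}\ell^{2q+n-1}e^{-\ell^2/(2d)}$ by $\int_{L}^{\infty}x^{2q+n-1}e^{-x^2/(2d)}\,\ud x$, which after the substitution $x=\sqrt{2d}\,s$ equals $(2d)^{q+n/2}\,\Gamma\!\bigl(q+\tfrac n2,\tfrac{L^2}{2d}\bigr)$; the standard tail bound $\Gamma(s,y)\le y^{s-1}e^{-y}\cdot\tfrac{y}{\,y-s+1\,}$ for $y>s-1$ then produces a tail of order $c(n)\,d\,L^{2q+n-2}e^{-cL^2/d}$ in the range $L^2\ge (2q+n)d$, where the summand is monotone, and multiplying by $d^{-n/2}$ gives the displayed bound. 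The delicate points are exactly the analytic bookkeeping: making the Stirling asymptotics in \eqref{eq:weights} uniform over the full interval $0\le\ell\le d$, handling the parity restriction $d-\ell\in 2\N$, and arranging the constants so that the decay factor appears with exponent $L^2/d$ (the bare Gaussian-tail estimate naturally yielding $e^{-L^2/(2d)}$ up to polynomial factors).
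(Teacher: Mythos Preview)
Your argument is correct and follows essentially the same route as the paper: reduce by scale-invariance to the uniform measure on the Bombieri--Weil unit sphere, apply Markov's inequality to $\|p-p|_L\|_{H^q}^2$, compute the resulting expectation as $N^{-1}\sum_{\ell>L}\ell^{2q}w_{n,d}(\ell)^2\dim V_{n,\ell}$, and then estimate this tail via Stirling on the Gamma ratio in \eqref{eq:weights}, the de Moivre--Laplace bound $2^{-d}\binom{d}{(d-\ell)/2}\le C\,d^{-1/2}e^{-\ell^2/(2d)}$, and an incomplete-Gamma tail bound. The only cosmetic difference is that the paper keeps the binomial coefficient explicit and passes to the Gaussian limit through dominated convergence (citing \cite[Lemma~6]{FLL}), whereas you invoke the uniform local-CLT bound directly; your final remark about $e^{-L^2/(2d)}$ versus $e^{-L^2/d}$ is apt, and indeed the paper's own change of variables $t=x^2/2$ silently drops a factor of $2$ in the lower limit, so the stated exponent should be read up to such harmless constants.
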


\begin{proof}First observe that, since $\{\left\|p-p|_{L}\right\|_{H^q}\leq t \|p\|_{\mathrm{BW}}\}\subset \mathcal{P}_{n,d}$ is a cone, denoting by $S^{N-1}$ the unit sphere in the Bombieri-Weil norm,  the required probability equals:
\begin{align} \PP\bigg\{\left\|p-p|_{L}\right\|_{H^q}\leq t \|p\|_{\mathrm{BW}}\bigg\}&=\frac{\mathrm{vol}\left(\{\left\|p-p|_{L}\right\|_{H^q}\leq t \|p\|_{\mathrm{BW}}\}\cap S^{N-1}\right)}{\textrm{vol}\left(S^{N-1}\right)}\\
&=\frac{\mathrm{vol}\left(\{\left\|p-p|_{L}\right\|_{H^q}\leq t \}\cap S^{N-1}\right)}{\textrm{vol}\left(S^{N-1}\right)}\\
&=1-\frac{\mathrm{vol}\left(\{\left\|p-p|_{L}\right\|_{H^q}> t \}\cap S^{N-1}\right)}{\textrm{vol}\left(S^{N-1}\right)}.\end{align}
We will estimate the quantity
\be Q(t)=\frac{\mathrm{vol}\left(\{\left\|p-p|_{L}\right\|_{H^q}> t \}\cap S^{N-1}\right)}{\textrm{vol}\left(S^{N-1}\right)}\ee
from above using Markov inequality:
\be\label{eq:P} Q(t)\leq \frac{\EE_{p\in S^{N-1}}\|p-p|_{L}\|^2_{H^q}}{t^2},\ee
where the expectation is computed sampling a polynomial $p$ uniformly from the unit Bombieri-Weil sphere. 

More precisely, expanding $p$ in an $L^2$-orthonormal basis $\{y_{\ell, j}\}$ (so that $\{w_{n,d}(\ell)y_{\ell, j}\}$ is a Bombieri-Weil orthonormal basis)
\be p=\sum_{d-\ell\in 2\N}\sum_{j\in J_\ell}\gamma_{\ell, j}w_{n,d}(\ell)y_{\ell, j},\ee
the condition that $p\in S^{N-1}$ writes $\sum_{\ell, j}\gamma_{\ell, j}^2=1$. Consequently, denoting as before ``$\mathrm{d}\theta$'' the integration with respect to the standard volume form of the sphere, we have:
\begin{align}\EE_{p\in S^{N-1}}\|p-p|_{L}\|^2_{H^q}&=\frac{1}{\mathrm{vol}(S^{N-1})}\int_{S^{N-1}}\sum_{\ell>L}\sum_{j\in J_\ell}\ell^{2q}w_{n, d}(\ell)^2 \gamma_{\ell, j}(\theta)^2 \mathrm{d}\theta\\
&=\sum_{\ell>L}\sum_{j\in J_\ell}\ell^{2q}w_{n, d}(\ell)^2  \frac{1}{\mathrm{vol}(S^{N-1})}\int_{S^{N-1}}  \gamma_{\ell, j}(\theta)^2 \mathrm{d}\theta\\
&=\sum_{\ell>L}\sum_{j\in J_\ell}\ell^{2q}w_{n, d}(\ell)^2 N^{-1}=(*).\end{align}
We use now the fact that the cardinality of $J_\ell$ is $O(\ell^{n-1})$ and that $N\sim \frac{d^n}{n!}$, obtaining the estimate:
\be\label{eq:s1} (*)\leq C_1(n)d^{-n}\sum_{\ell>L}\ell^{2q+n-1}w_{n,d}(\ell)^2.\ee
Moreover from \eqref{eq:weights} we easily get:
\be\label{eq:s2} w_{n,d}(\ell)^2\leq C_2(n)d^{-\frac{n}{2}} \frac{d^{\frac{1}{2}}}{2^{d-1}}{d\choose \frac{d-\ell}{2}}.\ee
Substituting \eqref{eq:s2} into \eqref{eq:s1} we get:
\be (*)\leq C_3(n)d^{-\frac{3n}{2}}\sum_{\ell>L}\ell^{2q+n-1}\frac{d^{\frac{1}{2}}}{2^{d-1}}{d\choose \frac{d-\ell}{2}}=(**).\ee
For $y\in \R$ let us denote now by $\{y\}$ the nearest integer to $y$ with the same parity as $d.$ Then we can rewrite:
\be (**)=C_3(n)d^{-\frac{3n}{2}}\int_L^\infty \{y\}^{2q+n-1}\frac{d^{\frac{1}{2}}}{2^{d-1}}{d\choose \frac{d-\{y\}}{2}} \mathrm{d}y.\ee
We apply now the change of variable $y=x\sqrt{d}$ in the above integral, and obtain:
\begin{align} (**)&=C_3(n)d^{-\frac{3n}{2}}\int_{\frac{L}{\sqrt{d}}}^\infty \{x\sqrt{d}\}^{2q+n-1}\frac{d^{\frac{1}{2}}}{2^{d-1}}{d\choose \frac{d-\{x\sqrt{d}\}}{2}} \sqrt{d}\,\mathrm{d}x\\
&\leq C_4(n)d^{-\frac{3n}{2}+\frac{2q+n}{2}}\int_{\frac{L}{\sqrt{d}}}^\infty x^{2q+n-1}\frac{d^{\frac{1}{2}}}{2^{d-1}}{d\choose \frac{d-\{x\sqrt{d}\}}{2}}\,\mathrm{d}x\\
&\leq C_5(n)d^{-n+q}\int_{\frac{L}{\sqrt{d}}}^\infty x^{2q+n-1}e^{-\frac{x^2}{2}}dx.
\end{align}
In the last line we have used the fact that
\be \lim_{d\to \infty}x^{2q+n-1}\frac{d^{\frac{1}{2}}}{2^{d-1}}{d\choose \frac{d-\{x\sqrt{d}\}}{2}}=x^{2q+n-1}e^{-\frac{x^2}{2}}\ee
and the convergence is dominated by an integrable function (by De Moivre-Laplace theorem, see also \cite[Lemma 6]{FLL}).

Applying the change of variables $t=x^2/2$ we can reduce the last integral to an incomplete Gamma integral:
\begin{align} (**)&\leq C_6(n)\int_{\frac{L^2}{d}}^{\infty}t^{\frac{2q+n}{2}-1}e^{-t}dt\\
\label{eq:incgamma}&\leq C_7(n)d^{-n+q}(L^2d^{-1})^{\frac{2q+n}{2}-1}e^{-\frac{L^2}{d}}\\
\label{eq:final}&=C_7(n)d^{-\frac{3n}{2}+1}L^{2q+n-2}e^{-\frac{L^2}{d}}.
\end{align}
For the inequality \eqref{eq:incgamma} we have used the asymptotic $ \Gamma(s,x)\sim x^{s-1}e^{-x}$ for the incomplete Gamma integral:
\be \int_{\frac{L^2}{d}}^{\infty}t^{\frac{2q+n}{2}-1}e^{-t}dt=\Gamma\left(\frac{2q+n}{2}, L^2d^{-1}\right).\ee
Finally, using the estimate \eqref{eq:final} into \eqref{eq:P} gives the desired inequality.
\end{proof}
\begin{remark}The final estimate \eqref{eq:final} from Proposition \ref{prop2} takes the following interesting shapes:
\begin{itemize}
\item [-] If $L=b \sqrt{d}$ with $b>0$, then:
 \be d^{-\frac{3n}{2}+1}L^{2q+n-2}e^{-\frac{L^2}{d}} \leq d^{-n+q}b^{2q+n-2}e^{-b^2}.\ee
 \item [-]  If $L=\sqrt{b d\log d}$ with $b>0$, then:
 \be d^{-\frac{3n}{2}+1}L^{2q+n-2}e^{-\frac{L^2}{d}} \leq d^{-n+q-b}(b \log d)^{q+\frac{n}{2}-1}.\ee
 \item  [-] If $L=d^{b}$ with  $b\in (\frac{1}{2}, 1)$, then there exists $c_1, c_2>0$ (depending on $b$) such that:
 \be d^{-\frac{3n}{2}+1}L^{2q+n-2}e^{-\frac{L^2}{d}} \leq c_1e^{- d^{c_2}}.\ee
%  \comm {\red Daouda:  $c_2 = 2b-1$ and $c_1 = d^{-\frac{3n}{2}+2bq+bn-2b+1}$. Only $c_2$ exclusively depend on b.}
 \item [-]  If $L=b d$ with  $b\in (0, 1)$, then there exists $c_1, c_2>0$ (depending on $b$) such that:
 \be \label{eq:estimate4}d^{-\frac{3n}{2}+1}L^{2q+n-2}e^{-\frac{L^2}{d}} \leq c_1e^{-c_2d}.\ee
 %  \comm {\red Daouda: $c_2 = b^{2}$ and $c_1 = d^{-\frac{n}{2}+2q-1}b^{2q+n-2}$. Only $c_2$ exclusively depend on b.}
\end{itemize}
%\comm{I changed some typos in these estimates, still we need to verify that they are correct...}

\end{remark}
\section{Stability}\label{sec:stability}Let us consider the discriminant set $\Sigma_{n,d}\subset \mathcal{S}_{n,d}$ consisting of all those polynomials whose zero set on the sphere is singular:
\be \Sigma_{n,d}=\{\textrm{$p\in \mathcal{S}_{n,d}$ such that there exists $x\in S^n$ with $p(x)=0$ and $\nabla_{S^n}p(x)=0$}\}.\ee
Given $p\in \mathcal{S}_{n,d}$ we denote by $\delta(p)$ its distance, in the Bombieri-Weil norm, to $\Sigma_{n,d}:$
\be \delta(p)=\min_{s\in \Sigma_{n,d}}\|s-p\|_{\textrm{BW}}.\ee

 If $Z_1, Z_2\subset S^n$ are two smooth hypersurfaces, we will write $(S^n, Z_1)\sim (S^n, Z_2)$ to denote that the two pairs $(S^n, Z_1)$ and $(S^n, Z_2)$ are diffeomorphic. Given $f\in \CC^1(S^n, \R)$ we denote by $Z(f)\subset S^n$ its zero set. A small perturbation in the $\CC^1$-norm of a function $f\in \CC^1(S^n, \R)$ whose zero set $Z(f)$ is nondegenerate does not change the class of the pair $(S^n, Z(f))$; next Proposition makes this more quantitative.
 \begin{prop}\label{prop:3}Let $p\in \mathcal{S}_{n,d}\backslash \Sigma_{n,d}$. Given $f\in \CC^1(S^n, \R)$ such that $\|f-p\|_{\CC^1}< \frac{\delta(p)}{2}$, we have:
\be (S^n, Z(p))\sim (S^n, Z(f)).\ee
%Moreover, when $n=1$, the zero set of $p$ is contained in a $\frac{4}{\sqrt{d}}$-neighborhood of the zero set of $f$.
\end{prop}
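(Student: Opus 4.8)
The plan is to connect $p$ to $f$ by the straight segment $p_t:=(1-t)p+tf$, $t\in[0,1]$, which lies in $\CC^1(S^n,\R)$ and satisfies $\|p_t-p\|_{\CC^1}=t\|f-p\|_{\CC^1}<\delta(p)/2$ for every $t$, to show that every $p_t$ has a nonsingular zero set with a uniform lower bound on the gradient along it, and then to conclude by Thom's (first) isotopy Lemma applied to the projection $S^n\times[0,1]\to[0,1]$. The one non-formal ingredient is a bound relating the Bombieri--Weil distance to the discriminant with pointwise data: for every $x\in S^n$,
\[\delta(p)^2\ \le\ p(x)^2+\|\nabla_{S^n}p(x)\|^2 .\]
Granting this, the remainder of the proof is elementary.

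For the displayed bound I would argue as follows. Since $\Sigma_{n,d}=\bigcup_{x\in S^n}\Sigma_x$ with $\Sigma_x:=\{s\in\mathcal{S}_{n,d}:s(x)=0,\ \nabla_{S^n}s(x)=0\}$, one has $\delta(p)\le\operatorname{dist}_{\mathrm{BW}}(p,\Sigma_x)$ for every $x$, and because the Bombieri--Weil norm is $O(n+1)$-invariant it suffices to estimate $\operatorname{dist}_{\mathrm{BW}}(p,\Sigma_{e_0})$ for $e_0=(1,0,\dots,0)$. There $\Sigma_{e_0}$ is the linear subspace cut out by the $n+1$ functionals $p\mapsto p(e_0)$ and $p\mapsto\partial_{x_i}p(e_0)$, $i=1,\dots,n$ (the components of the spherical gradient at $e_0$ for the orthonormal frame $e_1,\dots,e_n$). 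A direct computation in the monomial basis shows that the Bombieri--Weil Riesz representatives of these functionals are the pairwise orthogonal polynomials $x_0^d$ and $d\,x_0^{d-1}x_i$, of squared norms $1$ and $d$ respectively; hence the orthogonal projection of $p$ onto $\Sigma_{e_0}^{\perp}$ has squared norm
\[\operatorname{dist}_{\mathrm{BW}}(p,\Sigma_{e_0})^2=p(e_0)^2+\tfrac1d\sum_{i=1}^n\big(\partial_{x_i}p(e_0)\big)^2=p(e_0)^2+\tfrac1d\|\nabla_{S^n}p(e_0)\|^2\ \le\ p(e_0)^2+\|\nabla_{S^n}p(e_0)\|^2 .\]
(This actually gives the sharp identity $\delta(p)^2=\min_{x\in S^n}\big(p(x)^2+\tfrac1d\|\nabla_{S^n}p(x)\|^2\big)$, but only the inequality is needed here.)

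Now fix $t\in[0,1]$ and $x\in Z(p_t)$. From $0=p_t(x)=p(x)+t(f(x)-p(x))$ we get $|p(x)|=t|f(x)-p(x)|\le\|f-p\|_{\CC^1}<\delta(p)/2$, so the bound above forces $\|\nabla_{S^n}p(x)\|^2\ge\delta(p)^2-p(x)^2>\tfrac34\delta(p)^2$, i.e.\ $\|\nabla_{S^n}p(x)\|>\tfrac{\sqrt3}{2}\delta(p)$. Consequently
\[\|\nabla_{S^n}p_t(x)\|\ \ge\ \|\nabla_{S^n}p(x)\|-t\,\|\nabla_{S^n}(f-p)(x)\|\ >\ \tfrac{\sqrt3}{2}\delta(p)-\tfrac12\delta(p)\ =\ \tfrac{\sqrt3-1}{2}\,\delta(p)\ >\ 0 .\]
Thus $0$ is a regular value of each $p_t$ and of the jointly $\CC^1$ map $G(x,t)=p_t(x)$ on $S^n\times[0,1]$, and $\inf\{\|\nabla_{S^n}p_t(x)\|: t\in[0,1],\ x\in Z(p_t)\}>0$; in particular the projection $\pi:S^n\times[0,1]\to[0,1]$ is proper and is a submersion both on $S^n\times[0,1]$ and on the zero locus $W=G^{-1}(0)$ (the latter because at $(x,t)\in W$ one can solve $\langle\nabla_{S^n}p_t(x),v\rangle=-(f-p)(x)$ for $v$). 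Thom's isotopy Lemma then trivializes the stratified pair $(S^n\times[0,1],W)$ over the contractible interval, producing an ambient isotopy of $S^n$ carrying $Z(p_0)=Z(p)$ onto $Z(p_1)=Z(f)$, whence $(S^n,Z(p))\sim(S^n,Z(f))$.

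I expect the main obstacle to be the distance-to-discriminant estimate: once one has correctly identified the Riesz representatives of the evaluation and first-order functionals for the Bombieri--Weil inner product and observed their orthogonality — which is exactly what yields the clean factor $\tfrac12$ — the perturbation step and the appeal to the isotopy Lemma are routine. A minor, essentially cosmetic, point is that $f$ is only assumed $\CC^1$, so Thom's Lemma delivers a homeomorphism of pairs; one upgrades to a diffeomorphism either by first running the argument with a smooth function very close to $f$ in $\CC^1$ — which, as $\|f-p\|_{\CC^1}<\delta(p)/2$ is strict, still lies in the same ball — or by carrying out the normalized-gradient flow construction directly in the $\CC^1$ category.
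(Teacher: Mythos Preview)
Your proof is correct and follows essentially the same route as the paper: linear homotopy $p_t$, the identity $\delta(p)=\min_{x\in S^n}\big(p(x)^2+\tfrac1d\|\nabla_{S^n}p(x)\|^2\big)^{1/2}$, regularity of each $Z(p_t)$, and Thom's isotopy lemma. The only differences are cosmetic: the paper quotes this distance formula from \cite{Raffalli} (you derive it via the Riesz representatives) and uses a short geometric argument in $\R^2$ to show that $|p_t(\theta)|^2+\tfrac1d\|\nabla_{S^n}p_t(\theta)\|^2>0$ for all $\theta$, whereas you bound $\|\nabla_{S^n}p_t\|$ directly on $Z(p_t)$.
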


\begin{proof}
For $t\in [0,1]$ let us consider now the function $f_t=p+t(f-p)$. 
Since $\|f-p\|_{\CC^1}<\frac{\delta(p)}{2}$, for all $\theta\in S^n$ we have:
\be |f_t(\theta)|> |p(\theta)|-\frac{\delta}{2}.\ee
Moreover, since $d\geq 1$, from $\|f-p\|_{\CC^1}<\frac{\delta(p)}{2}$ we also deduce $\frac{\|f-p\|_{\CC^1}}{\sqrt{d}}<\frac{\delta(p)}{2},$ which in turn implies for every $t\in [0,1]$ and $\theta\in S^{n}$:
\be \label{eq:d1}\frac{\|\nabla_{S^{n}}f_t(\theta)\|}{\sqrt{d}}> \frac{\|\nabla_{S^{n}}p(\theta)\|}{\sqrt{d}}-\frac{\delta(p)}{2}.\ee 
Recall from \cite[Theorem 5.1]{Raffalli} the following explicit expression for $\delta(p)$:
\be \label{eq:d2}\delta(p)=\min_{\theta\in S^n}\left(|p(\theta)|^2+\frac{\|\nabla_{S^n}p(\theta)\|^2}{d}\right)^{1/2}.\ee
Note that $\left(|p(\theta)|^2+\frac{\|\nabla_{S^n}p(\theta)\|^2}{d}\right)^{1/2}$ equals the distance in $\R^2$ between the two vectors $v_1(\theta)=(|p(\theta)|, 0)$ and $v_2(\theta)=\left(0, \frac{\|\nabla_{S^{n}}p(\theta)\|}{\sqrt{d}}\right).$ Observe also that the two vectors $w_1(t,\theta)=(|f_t(\theta)|, 0)$ and $w_2(t,\theta)=\left(0, \frac{\|\nabla_{S^{n}}f_t(\theta)\|}{\sqrt{d}}\right)$, in virtue of \eqref{eq:d1} and \eqref{eq:d2}, satisfy:
\be w_1(t,\theta)\in B_1(\theta)=B_{\R^2}\left(v_1(\theta), \frac{\delta(p)}{2}\right)\quad \textrm{and}\quad w_2(t,\theta)\in B_2(\theta)=B_{\R^2}\left(v_2(\theta), \frac{\delta(p)}{2}\right).\ee
In particular:
\begin{align}\left(|f_t(\theta)|^2+\frac{\|\nabla_{S^n}f_t(\theta)\|^2}{d}\right)^{1/2}&=\|w_1(t,\theta)-w_2(t,\theta)\|\\
&> d_{\R^2}\left(B_1(\theta), B_2(\theta)\right)\\
&=\|v_1(\theta)-v_2(\theta)\|-\delta(p),
\end{align}
where the strict inequality comes from the fact that $w_1$ and $w_2$ belong to the \emph{interior} of the balls.

Taking the minimum over $\theta\in S^n$ in the above expression gives:
\be\label{eq:d3} \min_{\theta\in S^n}\left(|f_t(\theta)|^2+\frac{\|\nabla_{S^n}f_t(\theta)\|^2}{d}\right)^{1/2}>0\quad \forall t\in [0,1]. \ee
In particular the equation $\{f_t=0\}$ on $S^n$ is regular for all $t\in [0,1]$: whenever $f_t(\theta)=0$, then $\nabla_{S^n}f_t(\theta)$ cannot vanish because of the strict inequality in \eqref{eq:d3}. The result follows now from Thom's Isotopy Lemma.

\end{proof}
Next Proposition quantifies how large is the set of stable polynomials in the Bombieri-Weil norm.
\begin{prop}\label{prop4}There exists $c_3(n), c_4(n)>0$ such that for every $s\geq c_4(n)d^{2n}$ and for $p\in \mathcal{P}_{n,d}$:
\be \PP\bigg\{\|p\|_{\mathrm{BW}}\leq s \delta(p)\bigg\}\geq 1-c_3(n)\frac{d^{2n}}{s}.\ee
\end{prop}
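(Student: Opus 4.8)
The plan is to recognise $\delta(p)=\mathrm{dist}_{\mathrm{BW}}(p,\Sigma_{n,d})$, reduce the statement to the size of a neighbourhood of $\Sigma_{n,d}$ on the Bombieri--Weil unit sphere, and estimate that neighbourhood by integral geometry after confining $\Sigma_{n,d}$ to an algebraic hypersurface of controlled degree. First I would pass to the sphere: the complement of the event is $\{\delta(p)<\|p\|_{\mathrm{BW}}/s\}$, a cone in $\mathcal{P}_{n,d}$ since $\delta$ and $\|\cdot\|_{\mathrm{BW}}$ are positively $1$-homogeneous and $\Sigma_{n,d}$ is a cone. As the Kostlan measure is the standard Gaussian on $\mathcal{P}_{n,d}\simeq\R^N$ in a Bombieri--Weil orthonormal basis, its value on a cone is the normalised spherical volume of the trace on $S^{N-1}$, so
\be
\PP\{\|p\|_{\mathrm{BW}}>s\,\delta(p)\}=\frac{\mathrm{vol}\{p\in S^{N-1}:\ \mathrm{dist}_{\mathrm{BW}}(p,\Sigma_{n,d})<1/s\}}{\mathrm{vol}(S^{N-1})},
\ee
the normalised volume of the $(1/s)$-neighbourhood of $\Sigma_{n,d}$ on $S^{N-1}$ (equivalently, $\Sigma_{n,d}$ being a symmetric cone, of the angular $\arcsin(1/s)$-tube about $\Sigma_{n,d}\cap S^{N-1}$).

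Next I would confine the discriminant. A restriction $p=P|_{S^n}$ lies in $\Sigma_{n,d}$ exactly when $P$ has a real singular zero, $P(x)=0$ and $\nabla P(x)=0$ for some $x\in S^n$ (Euler's relation identifies the spherical and Euclidean gradients there). Hence $\Sigma_{n,d}$ is contained in the real zero set $V$ of the discriminant $\mathrm{Disc}_{n,d}$, a homogeneous polynomial in the $N$ coefficients of $P$ of the classical degree $D=(n+1)(d-1)^n$; if one prefers not to quote this, a B\'ezout count of the critical points on $S^n$ of a generic pencil $tP_1+P_0$ gives $D=O(d^n)$ just as well. So it suffices to bound the $(1/s)$-tube around $V\cap S^{N-1}$.

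Then I would run a Cauchy--Crofton estimate for that tube. Writing $\mathcal T_\varepsilon=\{q\in S^{N-1}:\mathrm{dist}_{\mathrm{BW}}(q,V)<\varepsilon\}$, the spherical Crofton formula gives $\mathrm{vol}(\mathcal T_\varepsilon)/\mathrm{vol}(S^{N-1})=\tfrac1{2\pi}\,\EE_C[\mathrm{length}(\mathcal T_\varepsilon\cap C)]$ over great circles $C$. For almost every $C$ the restriction of $\mathrm{Disc}_{n,d}$ to the plane spanned by $C$ is a non-zero degree-$D$ form, hence a trigonometric polynomial of degree $\le D$ on $C$ with at most $2D$ zeros, each contributing an arc of length $\le\pi\varepsilon$ to $\mathcal T_\varepsilon\cap C$; thus $\mathrm{length}(\mathcal T_\varepsilon\cap C)\le 2\pi D\varepsilon$ and $\mathrm{vol}(\mathcal T_\varepsilon)/\mathrm{vol}(S^{N-1})\le D\varepsilon\le(n+1)d^n\varepsilon$. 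Taking $\varepsilon=1/s$ and using $\Sigma_{n,d}\subset V$ yields $\PP\{\|p\|_{\mathrm{BW}}>s\,\delta(p)\}\le(n+1)d^n/s\le c_3(n)\,d^{2n}/s$, which is the assertion (the hypothesis $s\ge c_4(n)d^{2n}$ is more than enough; the argument only needs $\varepsilon<1$).

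The main obstacle is making this rigorous for the \emph{real, singular} variety $V$: one must check that the great circles contained in $V$ form a null set, and control the (harmless, for small $\varepsilon$) discrepancy between the extrinsic Bombieri--Weil distance and the intrinsic angular distance. A self-contained alternative avoiding algebraic geometry uses Raffalli's identity $\delta(p)=\min_{\theta}(|p(\theta)|^2+\|\nabla_{S^n}p(\theta)\|^2/d)^{1/2}$, reading $\{\delta(p)<\varepsilon\|p\|_{\mathrm{BW}}\}$ as the event that the Gaussian map $\Phi_p(\theta)=(p(\theta),\nabla_{S^n}p(\theta)/\sqrt d)\in\R^{n+1}$ enters a small ball; a union bound over an $\eta$-net of $S^n$ (with $\eta$ matched to the modulus of continuity of $\Phi_p$, controlled by a $C^2$-analogue of Proposition \ref{prop1} and a Gaussian tail bound for $\|p\|_{\mathrm{BW}}$) plus the elementary small-ball estimate for $\Phi_p(\theta)$ then closes it, at the price of a larger but still polynomial power of $d$. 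There the awkward part is the two-scale bookkeeping — net points where $\|\Phi_p\|<\varepsilon$ versus where $\|\Phi_p\|<2\varepsilon$ — together with the high-probability bound on the Lipschitz constant of $\Phi_p$.
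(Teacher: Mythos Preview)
Your reduction to a tube estimate around the discriminant on the Bombieri--Weil sphere is exactly the paper's approach; the paper simply invokes \cite[Theorem~21.1]{BuCu} at that point, which gives a bound of the form $C\cdot D\cdot N\cdot s^{-1}$ valid for $s\ge 2DN$, where $D=(n+1)(d-1)^n$ is the degree of the discriminant and $N=\dim\mathcal{P}_{n,d}\sim d^n/n!$. The product $DN$ is the source of the exponent $2n$ in both the bound and the threshold $s\ge c_4(n)d^{2n}$.

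Your direct Crofton argument, however, has a genuine gap. The identity $\mathrm{vol}(\mathcal T_\varepsilon)/\mathrm{vol}(S^{N-1})=\tfrac1{2\pi}\EE_C[\mathrm{length}(\mathcal T_\varepsilon\cap C)]$ is fine, but the claim that each zero of $\mathrm{Disc}_{n,d}|_C$ contributes an arc of length $\le\pi\varepsilon$ to $\mathcal T_\varepsilon\cap C$ is false. A point $\theta\in C$ lies in $\mathcal T_\varepsilon$ when $\mathrm{dist}_{S^{N-1}}(\theta,V)<\varepsilon$, not when $\mathrm{dist}_C(\theta,V\cap C)<\varepsilon$; these are very different. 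If $C$ meets $V$ transversally at angle $\alpha$, the arc of $C$ inside $\mathcal T_\varepsilon$ near a crossing has length $\approx 2\varepsilon/\sin\alpha$, which is unbounded as $\alpha\to 0$. (Try $N=3$, $V$ the equator, $C$ a great circle inclined at small $\alpha$: then $\mathcal T_\varepsilon\cap C$ has length $\approx 4\varepsilon/\sin\alpha$, not $\le 2\pi\varepsilon$.) The expectation over $C$ of this angle factor is exactly what produces the extra dimensional factor in the correct estimate: via Weyl's tube formula one gets $\mathrm{vol}(\mathcal T_\varepsilon)/\mathrm{vol}(S^{N-1})\lesssim D\varepsilon\cdot\mathrm{vol}(S^{N-2})/\mathrm{vol}(S^{N-1})\sim D\varepsilon\sqrt{N}$ in the smooth case, and the robust B\"urgisser--Cucker bound gives $D N\varepsilon$. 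So your intermediate claim $\mathrm{vol}(\mathcal T_\varepsilon)/\mathrm{vol}(S^{N-1})\le D\varepsilon$ is too strong; the missing factor of $N$ is precisely why the proposition reads $d^{2n}$ rather than $d^n$, and why the threshold on $s$ is needed. The obstacle you flag (extrinsic vs.\ angular distance) is not the real one.

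Your alternative via Raffalli's formula and an $\eta$-net is viable in principle and would yield a bound with some polynomial power of $d$, but as you note, the bookkeeping is not carried out.
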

\begin{proof}Let $S^{N-1}\subset \mathcal{P}_{n,d}\simeq \mathcal{S}_{n,d}$ be the unit sphere for the Bombieri-Weil norm and consider the algebraic set:
\be \overline{\Sigma}=\Sigma_{n,d}\cap S^{N-1}.\ee

Observe that there exists a polynomial $Q:\mathbb{C}[z_0, \ldots, z_n]\to \mathbb{C}$ (the discriminant polynomial) which vanishes exactly at polynomials whose zero set in the projective space $\CP^n$ is singular, which has real coefficients and degree $(n+1)(d-1)^{n}$.  

Note that if $P\in \Sigma_{n,d}$ then $\{P=0\}\subset \CP^n$ is also singular; it follows that $\overline{\Sigma}$ is contained in $Z(Q)\cap \R[x_0, \ldots, x_n]$ and we can apply \cite[Theorem 21.1]{BuCu}. Denoting by $d_\textrm{sin}$ the sine distance\footnote{Strictly speaking $d_\textrm{sin}$ is not a metric on $S^{N-1}$, but rather on $\RP^{N-1}$.} in the sphere, \cite[Theorem 21.1]{BuCu} tells that there exists a constant $C_3>0$ such that for all $s\geq (2(n+1)(d-1)^n)N$ we have:
\be \frac{\textrm{vol}\left(\left\{\textrm{$p\in S^{N-1}$ such that $\frac{1}{d_\textrm{sin}(p, \overline{\Sigma})}\geq s$}\right\}\right)}{\textrm{vol}(S^{N-1})}\leq C_3(n+1)(d-1)^n N s^{-1}.\ee
Taking the cone over the set $\{\textrm{$p\in S^{N-1}$ such that $\frac{1}{d_\textrm{sin}(p, \overline{\Sigma})}\geq s$}\}$, we can rewrite the previous inequality in terms of the Kostlan distribution, obtaining that for all $s\geq (2(n+1)(d-1)^n)N$:
\be\label{eq:pefe} \PP\bigg\{ \|p\|_{\textrm{BW}}\geq s\delta(p)\bigg\}\leq C_3(n+1)(d-1)^n N s^{-1}.\ee
Observe now that, since $N={d+n\choose d}$, for some constants $c_3(n), c_4(n)>0$ we have:
\be (2(n+1)(d-1)^n)N\leq c_4(n)d^{2n}\quad \textrm{and}\quad C_3(n+1)(d-1)^n N\leq c_3(n)d^{2n}.  \ee
In particular \eqref{eq:pefe} finally implies that for all $s\geq c_4(n)d^{2n}$:
\be \PP\bigg\{\|p\|_{\mathrm{BW}}\leq \delta(p)s\bigg\}\geq 1-c_3(n)d^{2n}s^{-1}.\ee

\end{proof}
\section{Low degree approximation}
%Given $L>0$ and $p\in \mathcal{S}_{n,d}$ we adopt the suggestive notation:
%\be p|_{L}=\lambda_{L}(p).\ee
%In other words: $p|_L$ is the ``truncation'' of $p$ up to degree $L$ (and it is the restriction to $S^n$ of a polynomial of degree $L$). Notice that with this notation $p-p|_L=p-\lambda_L(p)=p-p|_{L}.$
\begin{thm}\label{thm:general}There exists $c_5(n)>0$ such that for every $L, \sigma>1$ we have:
\be \mathbb{P}\left\{\left\|p-p|_{L}\right\|_{C^1}<\frac{\delta(p)}{2}\right\}\geq1-\left(c_5(n)d^{\frac{5n}{2}+2}L^{2n-1}e^{-\frac{L^2}{d}}\sigma^2+\frac{1}{\sigma}\right).\ee
\end{thm}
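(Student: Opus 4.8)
The plan is to chain together the three estimates already established, namely Proposition \ref{prop1} (comparing the $\CC^1$-norm to a Sobolev norm), Proposition \ref{prop2} (comparing the Sobolev tail $\|p-p|_L\|_{H^q}$ to $\|p\|_{\mathrm{BW}}$ in probability), and Proposition \ref{prop4} (estimating in probability the ratio $\|p\|_{\mathrm{BW}}/\delta(p)$). First I would fix $q=\frac{n+1}{2}$, the smallest admissible exponent in Proposition \ref{prop1}, so that $2q+n-2=2n-1$, which is exactly the power of $L$ appearing in the claimed bound. With this choice, Proposition \ref{prop1} applied to the polynomial $p-p|_L\in \mathcal{S}_{n,d}$ gives $\|p-p|_L\|_{\CC^1}\leq c_1(n)\sqrt{d}\,\|p-p|_L\|_{H^{q}}$, where I crucially use that $p-p|_L$ is still (the restriction of) a degree-$d$ polynomial, so the bound of Proposition \ref{prop1} applies with the same $d$.

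Next I would introduce the free parameter $\sigma>1$ via Proposition \ref{prop4}: on the event $A_\sigma=\{\|p\|_{\mathrm{BW}}\leq \sigma \delta(p)\}$ we have $\PP(A_\sigma^c)\leq c_3(n)d^{2n}/\sigma$ as long as $\sigma\geq c_4(n)d^{2n}$. Separately, apply Proposition \ref{prop2} with the threshold $t$ chosen so that $c_1(n)\sqrt{d}\cdot t\cdot \sigma\leq \tfrac12$, i.e. $t=\tfrac{1}{2c_1(n)\sqrt{d}\,\sigma}$; then on the event $B_t=\{\|p-p|_L\|_{H^q}\leq t\|p\|_{\mathrm{BW}}\}$ one has, combining with Proposition \ref{prop1} and with $A_\sigma$,
\be
\|p-p|_L\|_{\CC^1}\leq c_1(n)\sqrt{d}\,\|p-p|_L\|_{H^q}\leq c_1(n)\sqrt{d}\,t\,\|p\|_{\mathrm{BW}}\leq c_1(n)\sqrt{d}\,t\,\sigma\,\delta(p)\leq \tfrac12\delta(p),
\ee
so that $A_\sigma\cap B_t\subseteq \{\|p-p|_L\|_{\CC^1}<\delta(p)/2\}$ — I would actually aim for a strict inequality by taking $t$ slightly below the equality value, or observe that on the complement of the discriminant (full measure) the inequality can be taken strict. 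Plugging $t=\tfrac{1}{2c_1(n)\sqrt{d}\,\sigma}$ into the bound of Proposition \ref{prop2} (with $2q+n-2=2n-1$) gives
\be
\PP(B_t^c)\leq c_2(n)\,\frac{d^{-\frac{3n}{2}+1}L^{2n-1}e^{-L^2/d}}{t^2}= c_2(n)\,d^{-\frac{3n}{2}+1}L^{2n-1}e^{-L^2/d}\cdot 4c_1(n)^2 d\,\sigma^2 = c_5'(n)\,d^{-\frac{3n}{2}+2}L^{2n-1}e^{-L^2/d}\sigma^2.
\ee
Finally, a union bound gives $\PP(\{\|p-p|_L\|_{\CC^1}<\delta(p)/2\})\geq 1-\PP(A_\sigma^c)-\PP(B_t^c)\geq 1-\big(c_3(n)d^{2n}\sigma^{-1}+c_5'(n)d^{-\frac{3n}{2}+2}L^{2n-1}e^{-L^2/d}\sigma^2\big)$, and absorbing both constants into a single $c_5(n)$ and bounding $d^{2n}\cdot\sigma^{-1}\le c_5(n)\sigma^{-1}$ crudely — here one must be slightly careful, since the stated bound has a bare $1/\sigma$ with no $d$-power in front: I would handle this by noting $d^{2n}\leq d^{\frac{5n}{2}+2}$ and folding the $d^{2n}$ into the \emph{other} term's coefficient only if $\sigma\geq d^{\cdots}$, or, more simply, by rescaling $\sigma$ (replace $\sigma$ by $c_3(n)d^{2n}\sigma$) so that the first term becomes exactly $1/\sigma$ and the second picks up an extra factor $(c_3(n)d^{2n})^2=O(d^{4n})$; combined with $d^{-\frac{3n}{2}+2}$ this yields $d^{\frac{5n}{2}+2}$, matching the statement. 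One also uses $L,\sigma>1$ and $L\leq d$ to keep everything in range of Propositions \ref{prop2} and \ref{prop4}.

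The bookkeeping of the powers of $d$ is the only real obstacle, and it is entirely routine: the substantive content is all in the three cited Propositions. The one genuine subtlety worth spelling out is the strictness of the inequality $\|p-p|_L\|_{\CC^1}<\delta(p)/2$ versus the $\leq$ that comes out of Markov/Proposition \ref{prop2}; this is harmless because one can always take $t$ to be, say, half of the critical value (changing $c_5(n)$ by a factor $4$), or because Proposition \ref{prop:3} only needs a strict $\CC^1$-bound which a non-strict $H^q$-bound with a strict constant gap already provides. The final rescaling of $\sigma$ to produce the clean $1/\sigma$ term, at the cost of inflating the exponent of $d$ in the first term from $2n+(-\frac{3n}{2}+2)$ up to $\frac{5n}{2}+2$, is the reason the stated exponent is $\frac{5n}{2}+2$ rather than something smaller.
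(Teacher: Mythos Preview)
Your proposal is correct and follows essentially the same approach as the paper: chain Propositions \ref{prop1}, \ref{prop2}, \ref{prop4} with $q=\tfrac{n+1}{2}$, then choose $t$ and $s$ so that $c_1(n)\sqrt{d}\,ts<\tfrac12$, and finally absorb the $d^{2n}$ coming from Proposition \ref{prop4} by the very rescaling $s=c_4(n)d^{2n}\sigma$ that you describe at the end. The paper makes exactly this substitution from the outset (with $t=\tfrac{1}{3c_1(n)c_4(n)d^{2n+1/2}\sigma}$, the factor $3$ rather than $2$ handling the strictness issue you flag), and your computation $d^{-\frac{3n}{2}+2}\cdot d^{4n}=d^{\frac{5n}{2}+2}$ is precisely how the stated exponent arises there as well.
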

\begin{remark}Of course the previous statement is interesting if we can choose $L, \sigma>0$ in such a way that $\frac{1}{\sigma}$ goes to zero, but not too fast, and $L$ is significantly smaller than $d$, but not too small, because we still want the exponential term $e^{-\frac{L^2}{d}}$ to kill the other factors and make the probability go to one. The choice of $\sigma$ a polynomial in $d$ and $L=O(\sqrt{d\log d})$ is in some sense optimal for our proof, see next Proposition \ref{prop:polycon}.
\end{remark}
\begin{proof}Let $p\in \mathcal{S}_{n,d}$ and  $L\in \{0, \ldots, d\}$. We have the following chain of inequalities:
\begin{align} \left\|p-p|_{L}\right\|_{C^1}&\leq c_1(n)d^{\frac{1}{2}} \|p-p|_{L}\|_{H^q}&\textrm{(Proposition \ref{prop1})}\\
&\leq c_1(n) d^{\frac{1}{2}}t\|p\|_{\textrm{BW}} &\textrm{(Proposition \ref{prop2})}\\
\label{eq:final4}&\leq  c_1(n) d^{\frac{1}{2}}t s\delta(p)&\textrm{(Proposition \ref{prop4})}
\end{align}
which hold for every  $q \geq \frac{n+1}{2}$, $t>0$ and $s \geq c_4(n)d^{2n}$, with probability
\be\PP\geq 1-\left(c_2(n)\frac{d^{\frac{-3n}{2}+1}(L)^{2q+n-2}e^{-\frac{L^2}{d}}}{t^2}+ c_3(n)\frac{d^{2n}}{s}\right).\ee
We now make the choices:
\be s=c_4(n)d^{2n}\sigma,\quad t=\frac{1}{3c_1(n)c_4(n)d^{2n+1/2}\sigma}\quad \textrm{and}\quad q=\frac{n+1}{2}.\ee
With this choices we have:
\be\label{eq:final1} s \geq c_4(n)d^{2n}\ee
\be \label{eq:final2}c_1(n) d^{\frac{1}{2}}t s<\frac{1}{2}\ee
\be\label{eq:final3} c_2(n)\frac{d^{\frac{-3n}{2}+1}(L)^{2q+n-2}e^{-\frac{L^2}{d}}}{t^2}+ c_3(n)\frac{d^{2n}}{s}\leq c_5(n)d^{\frac{5n}{2}+2}L^{2n-1}e^{-\frac{L^2}{d}}\sigma^2+\frac{1}{\sigma}, \ee
where we have set $c_5(n)=c_2(n)(3c_1(n)c_4(n))^2.$

Because of \eqref{eq:final1} we can apply the estimate in \eqref{eq:final4} which, using \eqref{eq:final2}, becomes:
\be \left\|p-p|_{L}\right\|_{C^1} \leq  c_1(n) d^{\frac{1}{2}}t s\delta(p)<\frac{\delta(p)}{2}.\ee
Using \eqref{eq:final3}, the last chain of inequalities holds with probability:
\be \mathbb{P}\geq 1-\left(c_5(n)d^{\frac{5n}{2}+2}L^{2n-1}e^{-\frac{L^2}{d}}\sigma^2+\frac{1}{\sigma}\right).\ee
\end{proof}

\begin{prop}\label{prop:polycon}For every $a>0$ there exists $b>0$ such that as $d\to \infty$:
\be\label{eq:stabL} \left\|p-p|_{\sqrt{bd\log d}}\right\|_{\CC^1}< \frac{\delta(p)}{2}\ee with probability greater than $1-O({d^{-a}}).$ 
\end{prop}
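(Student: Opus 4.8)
The plan is to apply Theorem \ref{thm:general} with the explicit choice $L = \sqrt{bd\log d}$ and then select $\sigma$ as a suitable power of $d$ so that both error terms decay like $d^{-a}$. First I would substitute $L = \sqrt{bd\log d}$ into the bound from Theorem \ref{thm:general}. The key simplification is the exponential factor: $e^{-L^2/d} = e^{-b\log d} = d^{-b}$. Also $L^{2n-1} = (bd\log d)^{(2n-1)/2} = b^{(2n-1)/2} d^{(2n-1)/2}(\log d)^{(2n-1)/2}$, so the first term in the parenthesis becomes
\be
c_5(n)\, d^{\frac{5n}{2}+2}\cdot b^{\frac{2n-1}{2}} d^{\frac{2n-1}{2}}(\log d)^{\frac{2n-1}{2}}\cdot d^{-b}\cdot \sigma^2 = c_5(n)\, b^{\frac{2n-1}{2}}\, d^{\frac{7n+3}{2}-b}(\log d)^{\frac{2n-1}{2}}\sigma^2.
\ee

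Next I would balance the two error terms $d^{\frac{7n+3}{2}-b}(\log d)^{\frac{2n-1}{2}}\sigma^2$ and $\sigma^{-1}$ by choosing $\sigma = d^{\beta}$ for an appropriate $\beta = \beta(n,a,b)>0$. With this choice the second term is $d^{-\beta}$, which is $O(d^{-a})$ as soon as $\beta \geq a$. For the first term we get, up to the $\log$-factor and constants, $d^{\frac{7n+3}{2}-b+2\beta}$; choosing $b$ large enough — concretely any $b > \frac{7n+3}{2} + 2\beta + a$ — makes the exponent of $d$ strictly less than $-a$, and the surviving $(\log d)^{(2n-1)/2}$ factor is absorbed since any fixed negative power of $d$ beats a fixed power of $\log d$. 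Thus for every $a>0$ one sets $\beta = a$ and then picks $b = b(n,a)$ satisfying $b > \frac{7n+3}{2} + 2a + a + 1$ (the extra $+1$ giving room to swallow the logarithm and the constant $c_5(n)$), which yields that both terms are $O(d^{-a})$ and hence the total probability is $1 - O(d^{-a})$.

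I should also check the hypotheses of Theorem \ref{thm:general}: we need $L,\sigma>1$, which holds for $d$ large since $L = \sqrt{bd\log d}\to\infty$ and $\sigma = d^a\to\infty$; and implicitly in the proof of Theorem \ref{thm:general} one needs $L \in \{0,\dots,d\}$ (from Proposition \ref{prop2}), i.e. $L\leq d$, which holds for $d$ large since $\sqrt{bd\log d} = o(d)$. One minor point is that $\sqrt{bd\log d}$ need not be an integer; this is harmless, since we may replace $L$ by its integer part (or the nearest integer of the correct parity), which only changes the estimates by bounded multiplicative factors absorbed into the constants.

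I do not expect any serious obstacle: the statement is essentially a calculus exercise in feeding the right parameters into Theorem \ref{thm:general}, and the only thing requiring a little care is tracking the exponent of $d$ through the substitution to confirm that, for $b$ large enough depending on $n$ and $a$, the polynomial-in-$d$ prefactor is indeed dominated by $d^{-b}$ with room to spare for the logarithmic factor. The "hard part," to the extent there is one, is simply bookkeeping the constants $c_1,\dots,c_5$ and the powers of $d$ correctly so that the final exponent is verifiably below $-a$.
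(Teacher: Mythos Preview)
Your proposal is correct and follows essentially the same approach as the paper: set $\sigma=d^{a}$, $L=\sqrt{bd\log d}$ in Theorem~\ref{thm:general}, and choose $b$ large enough so that the polynomial-and-log prefactor is beaten by $d^{-b}$. The paper's proof is terser (it lumps the exponents into constants $c_6(n),c_7(n)$ rather than computing $\tfrac{7n+3}{2}$ explicitly), but the argument is the same, and your extra checks on $L\le d$ and the integrality of $L$ are reasonable hygiene.
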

\begin{proof}Let $\sigma=d^{a}$  and $L=\sqrt{bd \log d}$. Then we have:
 \be c_5(n)d^{\frac{5n}{2}+2}L^{2n-1}e^{-\frac{L^2}{d}}\sigma^2\leq d^{c_6(n)+2a}(\log d)^{c_7(n)} d^{-b}\leq d^{-a},\ee
 where the last inequality holds for $b>0$ large enough. We apply now the previous Theorem \ref{thm:general} with this choice we have:
 \be \mathbb{P}\left\{\left\|p-p|_{\sqrt{bd\log d}}\right\|_{\CC^1}< \frac{\delta(p)}{2}\right\}\geq 1-\left(d^{c_6(n)+2a}(\log d)^{c_7(n)} d^{-b}+d^{-a}\right)\geq 1-O(d^{-a}).
 \ee
  %Let us denote for simplicity $p|_{L}=\lambda_{\sqrt{bd\log d}}(p)$. Then we have the following chain of inequalities:
%\begin{align} \|p-p|_{L}\|_{\CC^1}&\leq c_1(n)d^{\frac{1}{2}} \|p-p|_{L}\|_{H^q} &\textrm{(Proposition \ref{prop1})}\\
%&\leq c_1(n) d^{\frac{1}{2}}t\|p\|_{\textrm{BW}} &\textrm{(Proposition \ref{prop2})}\\
%&\leq  c_1(n) d^{\frac{1}{2}}t s\delta(p)&\textrm{(Proposition \ref{prop4})}
%\end{align}
%which hold with probability
%\be\PP\geq 1-\left(c_2(n)\frac{d^{-n+q-b}(b\log d)^{2q+n-2}}{t^2}+ c_3(n)\frac{d^{2n}}{s}\right).\ee
%Choosing now
%\be s=d^{2n+a}\geq c_3(n)d^{2n}\quad \textrm{and}\quad t=\frac{1}{3 c_1(n)d^{2n+a+\frac{1}{2}}},\ee
%we see that for $b>0$ large enough:
%\be  \|p-p|_{L}\|_{\mathcal{C^1}}<\frac{\delta(p)}{2}\quad \textrm{with $\PP\geq 1-O(n^{-a})$}.\ee
\end{proof}
%We recall also the following inequality from \cite{MC}, which will play a major role in the proof of our main theorem: for every $s\geq 0$ and for $p\in \mathcal{S}_{n,d}$ a random Kostlan polynomial
%\be \PP\left\{\|p\|_{\textrm{BW}}\leq \delta(p)s\right\}\geq 1-c_3(n)\frac{d^{n+2}}{s}.\ee
%\begin{remark}
%If we truncate the expansion of $p=\sum_\ell p_\ell$ to the first $d^{b}$ terms with $b\in (\frac{1}{2}, 1)$ the concentration becomes stronger (exponential of the type $1-O(e^{-d^c})$ with $c= min (2b-1, \alpha)$) and truncating instead to the first $O(bd)$ terms with $b\in (0,1)$ the concentration is even faster (exponential of the type $1-O(e^{-ed^c})$with $c= min (1, \alpha)$).
%\end{remark}

\section{Applications to random topology}In this section we show how the previous results can be used to put constraints on the topological type of the pair $(S^n, Z(p))$ for $p$ a random Kostlan polynomial. The first result is the following.
  \begin{thm}Let $p\in \mathcal{S}_{n,d}$ be a random Kostlan polynomial. As $d\to \infty$, with overwhelming probability the pair $(S^n,Z(p))$ is diffeomorphic to the pair $(S^n, Z(q))$ where $q$ is a polynomial of degree $O(\sqrt{d\log d})$. 
 \end{thm}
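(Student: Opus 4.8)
The plan is to combine the two main technical inputs already established: Proposition \ref{prop:polycon} (which gives, for every $a>0$, a constant $b>0$ so that the $\CC^1$-distance between $p$ and its low-degree truncation $p|_{\sqrt{bd\log d}}$ is smaller than $\delta(p)/2$ with probability at least $1-O(d^{-a})$) and Proposition \ref{prop:3} (which converts a $\CC^1$-approximation within $\delta(p)/2$ into a diffeomorphism of pairs). First I would fix, say, $a=1$ (or any positive value; the point is that $d^{-a}\to 0$), obtain the corresponding $b=b(n)>0$ from Proposition \ref{prop:polycon}, and set $L=\sqrt{bd\log d}$ and $q=p|_{L}$. On the event $E_d=\{\|p-p|_{L}\|_{\CC^1}<\delta(p)/2\}$, which has probability $1-O(d^{-1})\to 1$, we have in particular $\delta(p)>0$, so $p\notin\Sigma_{n,d}$ and Proposition \ref{prop:3} applies with $f=q=p|_{L}$, yielding $(S^n,Z(p))\sim(S^n,Z(q))$.

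Next I would verify that $q$ is genuinely the restriction to $S^n$ of a polynomial of degree $O(\sqrt{d\log d})$. By construction $p|_{L}=\sum_{d-\ell\in 2\N,\ \ell\leq L}p_\ell$, where each $p_\ell\in V_{n,\ell}$ is the restriction of a spherical harmonic of degree $\ell$; since on the sphere the factor $\|x\|^{d-\ell}$ is identically $1$, the function $p|_{L}$ equals the restriction of the polynomial $\sum_{\ell\leq L}H_\ell$ with $H_\ell\in\mathcal H_{n,\ell}$, which has degree at most $L=\sqrt{bd\log d}=O(\sqrt{d\log d})$. So $q$ has the required form, and the event $E_d$ is exactly the event that makes the diffeomorphism statement hold.

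Finally I would note that "with overwhelming probability" in the statement means $\mathbb P(E_d)\to 1$ as $d\to\infty$, which is immediate from $\mathbb P(E_d)\geq 1-O(d^{-1})$; if one wants the stronger "overwhelming" in the sense of faster-than-polynomial decay one simply takes $\sigma=d^{a}$ with $a$ growing, or indeed any choice making the bound in Theorem \ref{thm:general} tend to $1$, but Proposition \ref{prop:polycon} already records the clean polynomial rate and that suffices. There is essentially no obstacle here: this theorem is a direct corollary, and the only thing to be careful about is the bookkeeping that the low-degree harmonic truncation really is a polynomial restriction of the claimed degree (handled in the previous paragraph) and that the event on which everything works is a single event of probability tending to one, so that the approximating polynomial $q$ can be chosen measurably/deterministically on that event.
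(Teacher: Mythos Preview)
Your proposal is correct and follows exactly the paper's approach: the paper's own proof is a two-line application of Proposition~\ref{prop:polycon} followed by Proposition~\ref{prop:3}, and your write-up is a more detailed version of the same argument, additionally spelling out why $p|_{L}$ is the restriction of a polynomial of degree at most $L$ and why the relevant event has probability tending to one.
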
\begin{proof}This follows from Proposition \ref{prop:polycon}: in fact, by Proposition \ref{prop:3}, \eqref{eq:stabL} implies that the pairs $(S^n, Z(p))$ and $(S^n, Z(p|_{\sqrt{bd\log d}}))$ are diffeomorphic.
 \end{proof}

%\footnote{The strategy of proof of these results, which can be adapted to prove other probabilistic constraints, is the following:
%\begin{enumerate}
%\item we identify a ``topological quantity'' $\nu:\mathcal{P}_{n,d}\to \R$ that we want to study (e.g. $\nu(p)=b_0(Z(p))$, the number of connected components of $Z(p)\subset S^n$); we are interested in proving an upper bound for the probability of $\{\nu(p)>\tau(d)\}$, the set where our quantity is bigger than a given threshold (e.g. we want to prove a bound for the probability of $\{b_0(Z(p))>\alpha d^n\}$);
%\item we prove a bound for our topological quantity as a function of $d=\deg(p)$ (e.g. $b_0(Z(p))\leq cd^n$); 
%\item we chose the largest $L\in \{0, \ldots, d\}$ such that our quantity in degree $L$ is smaller than the given threshold: \be \deg(q)< L\implies \nu(q)<\tau(d).\ee
%(In the example: if $L<(\frac{\alpha}{c})^{\frac{1}{n}} d$ then $b_0(Z(q))<\alpha d^n$ for every $q$ of degree $\deg(q)\leq L$.)
%\item for such $L$ we must have that:
%\be\label{eq:right} \big\{\nu(p)>\tau(d)\big\} \subseteq\left\{\|p-\lambda_{L}(p)\|_{\CC^1}\geq \frac{\delta(p)}{2}\right\},\ee
%because otherwise, by Proposition \ref{prop:3}, the zero sets of $p$ and $p-\lambda_L(p)$ would be isotopic, our topological quantity would take the same value at $p$ and $p-\lambda_L(p)$ and this would contradict the choice of $L$ (the degree of $q=p-\lambda_L(p)$ is smaller than $L$).
%\item we use Propositions \ref{prop1}, \ref{prop2} and \ref{prop4} to produce a bound for the probability of the r.h.s. of \eqref{eq:right}.
%\end{enumerate}}

\subsection{Hypersurfaces with rich topology}
For a topological space $X$ we denote by $b(X)$ the sum of its $\mathbb{Z}_2$-Betti numbers (sometimes also called the homological complexity of $X$). Recall by \cite{Milnor} that if $P\in \R[x_0, \ldots, x_n]_{d}$, then the zero set of $p=P|_{S^n}$ has homological complexity bounded by $b(Z(p))\leq O(d^{n}).$

\begin{thm}\label{thm:constraint1}For $\alpha>0$ let $M_{\alpha ,d}\subset\mathcal{S}_{n,d}$ be the set:
\be M_{\alpha, d}=\{\textrm{polynomials $p$ such that $b(Z(p))\geq \alpha d^n$}\}.\ee
Then there exist $\gamma_1(\alpha), \gamma_2(\alpha)>0$ such that:
\be \PP(M_{\alpha ,d})\leq \gamma_1(\alpha)e^{-\gamma_2(\alpha)d}.\ee

\end{thm}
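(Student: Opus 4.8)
The plan is to combine the low-degree approximation of Proposition \ref{prop:polycon} with a deterministic lower bound on the degree needed to realize a hypersurface of large homological complexity. The key observation is that if $p$ can be stably approximated by $q=p|_{L}$ with $L=O(\sqrt{d\log d})$, then $Z(p)$ and $Z(q)$ are diffeomorphic by Proposition \ref{prop:3}, so in particular $b(Z(p))=b(Z(q))$. But $q$ is the restriction to $S^n$ of a polynomial of degree $L$, so by the Milnor--Thom bound quoted above $b(Z(q))\leq c_8(n)L^n=c_8(n)\big(bd\log d\big)^{n/2}$ for some constant $c_8(n)>0$. This grows strictly slower than $\alpha d^n$, so for $d$ large enough $b(Z(q))<\alpha d^n$, forcing $p\notin M_{\alpha,d}$.

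Concretely, I would argue as follows. Fix $\alpha>0$. Choose the exponent $a$ in Proposition \ref{prop:polycon} to be, say, $a=1$ (any fixed positive value works since we actually want an \emph{exponential} decay, so I will be more careful below), and let $b=b(a)$ and $L=\sqrt{bd\log d}$ be as provided. Let $E_d$ denote the event $\{\|p-p|_{L}\|_{\CC^1}<\delta(p)/2\}$. On $E_d$ we have $(S^n,Z(p))\sim(S^n,Z(p|_L))$, hence
\be b(Z(p))=b(Z(p|_L))\leq c_8(n)L^n=c_8(n)(bd\log d)^{n/2}.\ee
Since $(bd\log d)^{n/2}=o(d^n)$, there is $d_0=d_0(n,\alpha,b)$ such that for all $d\geq d_0$ the right-hand side is $<\alpha d^n$; thus for $d\geq d_0$ the event $E_d$ is disjoint from $M_{\alpha,d}$, i.e. $M_{\alpha,d}\subset E_d^c$. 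Therefore $\PP(M_{\alpha,d})\leq \PP(E_d^c)=O(d^{-a})$.

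The one subtlety is that the statement of the theorem claims \emph{exponential} decay $\gamma_1(\alpha)e^{-\gamma_2(\alpha)d}$, whereas Proposition \ref{prop:polycon} only gives polynomial decay $O(d^{-a})$. To upgrade this, I would instead go back to Theorem \ref{thm:general} and use the fourth regime in the remark after Proposition \ref{prop2}: take $L=\beta d$ for a small constant $\beta=\beta(\alpha)\in(0,1)$ and $\sigma$ a suitable power of $d$. With $L$ a fixed fraction of $d$, the factor $d^{\frac{5n}{2}+2}L^{2n-1}e^{-L^2/d}\sigma^2$ behaves like $d^{O(1)}e^{-\beta^2 d}\sigma^2$, which can be made $\leq e^{-\gamma_2(\alpha)d}$ by choosing $\sigma=e^{\gamma_2(\alpha)d/2}$ or even just a large power of $d$ while keeping $1/\sigma$ of the same exponential order; in fact setting $\sigma=e^{c d}$ for small $c$ balances the two error terms and yields $\PP(E_d^c)\leq \gamma_1(\alpha)e^{-\gamma_2(\alpha)d}$. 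On $E_d$ we then get $b(Z(p))\leq c_8(n)(\beta d)^n=c_8(n)\beta^n d^n$, and choosing $\beta=\beta(\alpha)$ small enough that $c_8(n)\beta^n<\alpha$ makes $E_d$ disjoint from $M_{\alpha,d}$ for all $d$. Hence $\PP(M_{\alpha,d})\leq\PP(E_d^c)\leq\gamma_1(\alpha)e^{-\gamma_2(\alpha)d}$, as claimed.

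The main obstacle, and the only place requiring genuine care, is the bookkeeping in this last step: one must verify that the constant $\beta(\alpha)$ dictating the approximating degree and the constant $\gamma_2(\alpha)$ governing the exponential rate can be chosen \emph{simultaneously}, i.e. that shrinking $\beta$ to beat $c_8(n)\beta^n<\alpha$ does not destroy the exponential gain $e^{-\beta^2 d}$ (it does not, since $\beta>0$ is fixed once $\alpha$ is fixed). Everything else — the Milnor--Thom bound, the diffeomorphism invariance of total Betti number, and the probability estimate — is either quoted or an immediate consequence of the results proved earlier in the paper.
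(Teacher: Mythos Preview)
Your proposal is correct and, after the self-correction in the second half, follows essentially the same route as the paper: take $L=\beta d$ with $\beta$ small enough that $c_8(n)\beta^n<\alpha$, then feed this into Theorem \ref{thm:general} (the paper does the equivalent bookkeeping directly with the parameters $s,t$ from its proof, choosing both of exponential size in $d$) and use estimate \eqref{eq:estimate4} to get exponential decay of $\PP(E_d^c)$. The initial detour through Proposition \ref{prop:polycon} is unnecessary, and your brief wavering about $\sigma$ being a power of $d$ should be dropped---as you note yourself, $\sigma$ must be exponential in $d$ to make $1/\sigma$ decay exponentially.
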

\begin{proof}Observe first that if $q\in \R[x_0, \ldots, x_n]_L$ ($q$ is just a polynomial of degree $L$, not necessarily homogeneous), then $b(Z(q))\leq cL^n$ for some $c>0$, again by \cite{Milnor}. Hence, if we want $b(Z(q))>\alpha d^n$ we must have:
\be L>\left(\frac{\alpha}{c}\right)^{\frac{1}{n}} d.\ee
Arguing as in the proof of Theorem \ref{thm:general}, where now we take the projection $\lambda=\lambda_L:\mathcal{S}_{n, d}\to \mathcal{S}_{n, L}$ choosing the value $L=\left(\frac{\alpha}{c}\right)^{\frac{1}{n}} d$, we se that for every $t>0$ and $s\geq c_4(n)d^{2n}$:
\be \|p-p|_{L}\|_{\CC^1}\leq  c_1(n) d^{\frac{1}{2}}t s\delta(p)\ee
with probability
\begin{align} \PP&\geq1-\left(c_2(n)\frac{d^{-\frac{3n}{2}+1}L^{2q+n-2}e^{-\frac{L^2}{d}}}{t^2}+ c_3(n)\frac{d^{2n}}{s}\right)&\textrm{(Propositions \ref{prop1}, \ref{prop2}, \ref{prop4})}\\ 
&\geq 1-\left(c_5(n,\alpha)\frac{e^{-c_6(\alpha)d}}{t^2}+ c_3(n)\frac{d^{2n}}{s}\right)&\textrm{(by estimate \eqref{eq:estimate4})}.
\end{align}
Observe now that:
\be\label{eq:t} t=c_5(n,\alpha)^{1/2}e^{-\frac{c_6(\alpha)d}{4}}\implies c_5(n,\alpha)\frac{e^{-c_6(\alpha)d}}{t^2}\leq \gamma_{3}(\alpha)e^{-\gamma_{4}(\alpha)d}\ee
for some constants $\gamma_3(\alpha), \gamma_4(\alpha)>0$, and
\be \label{eq:s}s= \frac{e^{\frac{c_6(\alpha)d}{4}}}{3c_1(n) d^{1/2}c_5(n,\alpha)^{1/2}}\implies c_3(n)\frac{d^{2n}}{s} \leq \gamma_5(\alpha)e^{-\gamma_6(\alpha)d} \ee
for some constants $\gamma_5(\alpha), \gamma_6(\alpha)>0$.

Choosing $s$ as in \eqref{eq:s} and $t$ as in \eqref{eq:t}, for $d>0$ large enough we have $s\geq c_4(n)d^{2n}$, and $c_1(n)d^{1/2}ts<\frac{1}{2}$; it follows that there exist constants $\gamma_1(\alpha), \gamma_2(\alpha)>0$ such that
\be\label{eq:finalest} \|p-p|_{L}\|_{\CC^1}<\frac{\delta(p)}{2}\quad \textrm{with probability}\quad 
 \PP\geq 1-\gamma_1(\alpha)e^{-\gamma_2(\alpha)d}.\ee
 
 The condition $b(Z(p))>\alpha d^n$ implies that with the choice of $L<\left(\frac{\alpha}{c}\right)^{\frac{1}{n}} d$ we must have $\|p-p|_{L}\|_{\CC^1}\geq \frac{\delta(p)}{2},$ for otherwise the zero set of $p$ would be diffeomorphic to the zero set of $p-p|_{L}$ which, since $\deg(p-p|_{L})<L$, has homological complexity bounded by $b(Z(p-p|_{L}))<cL^n<\alpha d^n.$ In particular:
 \be \bigg\{b(Z(p))>\alpha d^n\bigg\}\subset\left\{\|p-p|_{L}\|_{\CC^1}\geq \frac{\delta(p)}{2}\right\},\ee
 which combined with \eqref{eq:finalest} implies the statement.

\end{proof}
\begin{remark}\label{remark:maximal}
It is not difficult to derive from Theorem \ref{thm:constraint1} a similar result for random zero projective sets $Z(p)\subset \RP^n$. In this context, the previous result should be compared with \cite[Theorem 1]{GayetWelschingerrarefaction}, where the authors prove that the Kostlan measure of the set of \emph{curves} $C\subset \RP^2$ of degree $d$ whose number of components is more than $\frac{(d-1)(d-2)}{2}+1-ad$ is $O(e^{-c_2d})$. Theorem \ref{thm:constraint1} is stronger in two senses: it applies to the general case of hypersurfaces in $\RP^n$ and it gives exponential rarefaction for all sets of the form $\{b_0(Z(p))\geq \alpha d^n\}$ (i.e. not necessarily a linear correction from the maximal bound).
\end{remark}
\subsection{Depth of a nest} Given $p\in \mathcal{S}_{n,d}\backslash \Sigma_{n,d}$, its zero set $Z(p)\subset S^n$ consists of a finite union of connected, smooth and compact hypersurfaces. Fixing a point $y_\infty\in S^n$ (with $\PP=1$ this point does not belong to $Z(p)$), every such component of $Z(p)$ separates the sphere $S^n$ into two open sets: a ``bounded'' one (the open set which does not contain $y_\infty$) and an ``unbounded'' one (the open set which contains $y_\infty$). The nesting graph of $Z(p)$ (with respect to $y_\infty$) is a graph whose vertices are the components of $Z(p)$ and there is an edge between two components if and only if one is contained in the bounded component of the other. The resulting graph is a forest (a union of trees) and we say that $(S^n, Z(p))$ has a nest of depth $m$ if this forest contains a tree of depth $m$.
\begin{thm}\label{thm:constraint2}For $\alpha>0$ let $N_{\alpha d}\subset \mathcal{S}_{n,d}$ be the set:
\be N_{\alpha, d}=\{\textrm{polynomials $p$ such that $Z(p)$ has a nest of depth $\geq \alpha d$}\}.\ee
Then there exist $c_1(\alpha), c_2(\alpha)>0$ such that:
\be \PP(N_{\alpha, d})\leq c_1(\alpha)e^{-c_2(\alpha)d}.\ee
\end{thm}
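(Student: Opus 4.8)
The plan is to mimic the strategy used for Theorem~\ref{thm:constraint1}: identify the minimal degree needed to produce a nest of depth $m$, show this forces the truncation level $L$ to be $\Omega(d)$, and then invoke the exponential rarefaction estimate \eqref{eq:estimate4} exactly as in the proof of Theorem~\ref{thm:constraint1}. The only genuinely new ingredient is a deterministic statement: \emph{if $q$ is a (not necessarily homogeneous) polynomial of degree $L$, then $Z(q)\subset S^n$ cannot contain a nest of depth larger than $cL$ for some constant $c=c(n)>0$.} Granting this, the proof is a verbatim copy of the argument for Theorem~\ref{thm:constraint1}, with ``$b(Z(p))\geq \alpha d^n$'' replaced by ``$Z(p)$ has a nest of depth $\geq \alpha d$'' and ``$cL^n$'' replaced by ``$cL$''; the same choices $L=(\alpha/c)^{1/n}\cdot$(appropriate power of $d$)$\,$— here simply $L=(\alpha/c)d$ since the obstruction is linear in $L$ — together with the choices of $s$ and $t$ from \eqref{eq:t}, \eqref{eq:s}, yield $\|p-p|_{L}\|_{\CC^1}<\delta(p)/2$ with probability $\geq 1-\gamma_1(\alpha)e^{-\gamma_2(\alpha)d}$, and Proposition~\ref{prop:3} then shows $Z(p)$ is diffeomorphic to $Z(p|_{L})$, whose nest depth is $<\alpha d$.

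The heart of the matter, and the step I expect to be the main obstacle, is therefore the deterministic nest-depth bound $\text{depth}(Z(q))\leq c(n)\deg(q)$. I would prove this by a Bézout-type / intersection-theoretic argument: choose a generic great circle (or a generic line through a pair of antipodal points, suitably projected), which is transverse to $Z(q)$ and meets the innermost component of a maximal nest. Walking along this arc from $y_\infty$ inward, each time we cross from the exterior of a component into its bounded side we record an intersection point; a nest of depth $m$ forces the arc to accumulate at least $m$ transversal intersections with $Z(q)$ on one side. But a generic line meets the degree-$L$ hypersurface $\{q=0\}$ in at most $O(L)$ points (after homogenizing or passing to the affine chart, Bézout with a line gives $\leq L$ points, and the great-circle/affine-line correspondence costs only a bounded multiplicative factor). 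Hence $m\leq c(n)L$. Some care is needed to (i) handle the non-homogeneity of $q$ and the restriction to $S^n$ rather than $\RP^n$ — this is exactly the same bookkeeping already used in Theorem~\ref{thm:constraint1} via \cite{Milnor} — and (ii) ensure the chosen great circle actually ``pierces'' the whole nest, which follows from the Jordan–Brouwer separation property: a point in the innermost bounded region is separated from $y_\infty$ by every component of the nesting chain, so any arc joining them crosses each of the $m$ components at least once.

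With the deterministic bound in hand, I would write: by the nest-depth bound, if $\deg(q)=L$ then $Z(q)$ has no nest of depth $\geq \alpha d$ as soon as $L<(\alpha/c(n))\,d=:L_d$; fix $L=L_d$. Exactly as in Theorem~\ref{thm:constraint1}, combining Propositions~\ref{prop1}, \ref{prop2}, \ref{prop4} with estimate \eqref{eq:estimate4} (applicable since $L_d$ is a fixed fraction of $d$) and the choices of $s,t$ as in \eqref{eq:t}, \eqref{eq:s}, we get $\|p-p|_{L_d}\|_{\CC^1}<\frac{\delta(p)}{2}$ with probability $\geq 1-\gamma_1(\alpha)e^{-\gamma_2(\alpha)d}$. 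On this event Proposition~\ref{prop:3} gives $(S^n,Z(p))\sim(S^n,Z(p|_{L_d}))$, and since $\deg(p|_{L_d})\leq L_d<(\alpha/c(n))d$ the zero set $Z(p|_{L_d})$ has no nest of depth $\geq\alpha d$; hence neither does $Z(p)$. Therefore
\be N_{\alpha,d}\subset\left\{\|p-p|_{L_d}\|_{\CC^1}\geq\tfrac{\delta(p)}{2}\right\},\ee
and taking complements yields $\PP(N_{\alpha,d})\leq\gamma_1(\alpha)e^{-\gamma_2(\alpha)d}$, which is the claim with $c_1(\alpha)=\gamma_1(\alpha)$, $c_2(\alpha)=\gamma_2(\alpha)$.
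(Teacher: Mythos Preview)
Your proposal is correct and follows essentially the same approach as the paper, which simply says the proof is identical to that of Theorem~\ref{thm:constraint1} after observing that the nest depth of the zero set of a degree-$L$ polynomial is smaller than $L$. In fact you supply more detail than the paper does, sketching a B\'ezout/great-circle argument for the deterministic nest-depth bound that the paper merely asserts (with constant $c(n)=1$).
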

\begin{proof}The proof is essentially the same as the proof of Theorem \ref{thm:constraint1}, after observing that the depth of every nest of the zero set of a polynomial of degree $L$ is smaller than $L$.
\end{proof}

\bibliographystyle{plain}
\bibliography{stabilityrandompoly}
\end{document}